\DeclareMathOperator{\St}{St}
\DeclareMathOperator{\rank}{rank}
\DeclareMathOperator{\Sup}{Supp}
\DeclareMathOperator{\Cl}{Cl}
\DeclareMathOperator{\PGL}{PGL}
\DeclareMathOperator{\Pic}{Pic}
\DeclareMathOperator{\GL}{GL}
\DeclareMathOperator{\coho}{H}
\DeclareMathOperator{\Gal}{Gal}
\DeclareMathOperator{\Res}{Res}
\DeclareMathOperator{\Jac}{Jac}
\DeclareMathOperator{\End}{End}
\DeclareMathOperator{\Aut}{Aut}
\DeclareMathOperator{\im}{Im}
\DeclareMathOperator{\Disc}{Disc}
\def\s{\mathfrak s}
\def\sm{s_{\text{max}}}
\def \QQ{\mathbb{Q}}
\def \AA{\mathbb{A}}
\def \ZZ{\mathbb{Z}}
\def \FF{\mathbb{F}}
\def \pts{\mathcal{P}}
\def \PP{\mathbb{P}}
\def\Om{\mathscr{O}}
\def \R{\mathscr{R}}
\def\C{\mathscr{C}}
\def\X{\mathscr{X}}
\def\Y{\mathscr{Y}}
\def\YY{\mathcal{Y}}
\def\D{\mathscr{D}}
\def\<#1>{{\left\langle{#1}\right\rangle}}
\def \cohoet{\text{H}^1_{\text{\'et}}}
\DeclareMathOperator{\Sp}{Sp}
\theoremstyle{plain}
\newtheorem{thm}{Theorem}[section]
\newtheorem{prop}[thm]{Proposition}
\newtheorem{lemma}[thm]{Lemma}
\theoremstyle{remark}
\newtheorem{remark}[thm]{Remark}
\newtheorem*{rem*}{Remark}
\theoremstyle{definition}
\newtheorem{obs/res}[thm]{Observation}
\newtheorem{definition}[thm]{Definition}
\newtheorem*{def*}{Definition}
\newtheorem*{ex1}{Example 1}
\newtheorem*{ex2}{Example 2}
\newtheorem*{ex3}{Example 3}
\newtheorem*{ex*}{Example}
\definecolor{amethyst}{rgb}{0.6, 0.4, 0.8}
\definecolor{atomictangerine}{rgb}{1.0, 0.6, 0.4}
\definecolor{deeppeach}{rgb}{1.0, 0.8, 0.64}
\definecolor{eggshell}{rgb}{0.94, 0.92, 0.84}
\definecolor{lightapricot}{rgb}{0.99, 0.84, 0.69}
\definecolor{lemonchiffon}{rgb}{1.0, 0.98, 0.8}
\definecolor{roundabout}{rgb}{1.0, 0.91, 0.75}
\definecolor{atomictangerine}{rgb}{1.0, 0.6, 0.4}
\def\rootsep{0.03}               
\def\clustersep{0.06}            
\def\cnamescale{0.4}             
\def\cdepthscale{0.4}            
\def\cltopskip{1pt}              
\def\clbottomskip{1pt}           
\def\rootscale{0.5}   \def\rootcolor{amethyst}
\tikzset{
  root/.style = {circle,scale=\rootscale,ball color=\rootcolor},
    rc/.style 2 args = {right=#1*1.5*\clustersep of {#2.east|-first},root}, rr/.style = {right=\rootsep of {#1.east|-first},root},
  roott/.style = {circle,inner sep=-2pt,minimum size=5pt,black,font=\ttfamily\footnotesize},
    rct/.style 2 args = {right=#1*1.5*\clustersep of {#2.east|-first},roott}, rrt/.style = {right=\rootsep of {#1.east|-first},roott},
  rootA/.style = {circle,scale=\rootscaleA,ball color=\rootcolorA},
    rcA/.style 2 args = {right=#1*1.5*\clustersep of {#2.east|-first},rootA}, rrA/.style = {right=\rootsep of {#1.east|-first},rootA},
  rootB/.style = {circle,scale=\rootscaleB,ball color=\rootcolorB},
    rcB/.style 2 args = {right=#1*1.5*\clustersep of {#2.east|-first},rootB}, rrB/.style = {right=\rootsep of {#1.east|-first},rootB},
  rootC/.style = {circle,scale=\rootscaleC,ball color=\rootcolorC},
    rcC/.style 2 args = {right=#1*1.5*\clustersep of {#2.east|-first},rootC}, rrC/.style = {right=\rootsep of {#1.east|-first},rootC},
  rootD/.style = {circle,scale=\rootscaleD,ball color=\rootcolorD},
    rcD/.style 2 args = {right=#1*1.5*\clustersep of {#2.east|-first},rootD}, rrD/.style = {right=\rootsep of {#1.east|-first},rootD},
  cluster/.style = {draw=blue!70,thick,rounded corners,inner sep=22*\clustersep,outer xsep=22*\clustersep,fit=#1},
  clabel/.style  = {anchor=west,scale=\cdepthscale,black,inner sep=0,outer xsep=1,outer ysep=0},
  clabelL/.style = {above right=-\clustersep of #1t.north east,clabel},
  clabelD/.style = {below right=-\clustersep of #1t.south east,clabel},
  clouter/.style = {inner sep=0,outer sep=0,fit=#1}
}
\def\Cluster #1 = #2;{\node[cluster=#2] (#1) {};}
\def\ClusterL #1[#2] = #3;{
  \node[cluster=#3] (#1t) {}; \node[clabelL=#1] (#1l) {$#2$}; \node[clouter=(#1t)(#1l)] (#1) {};}
\def\ClusterD #1[#2] = #3;{
  \node[cluster=#3] (#1t) {}; \node[clabelD=#1] (#1d) {$#2$}; \node[clouter=(#1t)(#1d)] (#1) {};}
\def\ClusterLD #1[#2][#3] = #4;{
  \node[cluster=#4] (#1t) {}; \node[clabelL=#1] (#1l) {$#2$}; 
  \node[clabelD=#1] (#1d) {$#3$}; \node[clouter=(#1t)(#1l)(#1d)] (#1) {};}
\def\ClusterLDName #1[#2][#3][#4] = #5;{
  \node[cluster=#5] (#1t) {}; \node[clabelL=#1] (#1l) {$#2$}; 
  \node[clabelD=#1] (#1d) {$#3$}; 
  \node[scale=\cnamescale,above=\clustersep/3 of #1t,inner sep=0, outer sep=0] (#1n) {$#4$}; 
  \node[clouter=(#1l)(#1d)(#1t)] (#1) {};}
\newcommand{\Root}[4][]{
  \ifx\relax#2\relax\node[rr#1=#3] (#4) {};\else\node[rc#1={#2}{#3}] (#4) {};\fi}
\newcommand{\RootT}[5][]{
  \ifx\relax#2\relax\node[rrt#1=#3] (#4) {#5};\else\node[rct#1={#2}{#3}] (#4) {#5};\fi}
\long\def\clusterpicture#1\endclusterpicture{\pb{\vbox to \cltopskip{\vfill}\\%
  \begin{tikzpicture}\node[coordinate] (first) {};#1\end{tikzpicture}\\[-11pt]\vbox to \clbottomskip{\vfill}}}   
\long\def\clusterpictureopt#1#2\endclusterpicture{\pb{\vbox to \cltopskip{\vfill}\\%
  \begin{tikzpicture}[#1]\node[coordinate] (first) {};#2\end{tikzpicture}\\[-11pt]\vbox to \clbottomskip{\vfill}}}   
\def\pb#1{\pbox[c]{\textwidth}{\hfil #1\hfil}}
\begin{document}
\title{On Galois representations of superelliptic curves}

\author{Ariel Pacetti}
\address{FaMAF-CIEM (CONICET), Universidad Nacional de C\'ordoba,
Medina A\-llen\-de s/n, Ciudad Universitaria, 5000 C\' ordoba, Rep\'
ublica Argentina.}
\email{apacetti@famaf.unc.edu.ar}
\thanks{AP was partially supported by FonCyT BID-PICT 2018-02073}
\keywords{Superelliptic Curves, Galois Representations}
 \subjclass[2010]{Primary: 11G20 , Secondary: 11F80 }

\author{Angel Villanueva} \address{CONICET, FCEN-Universidad Nacional de
  Cuyo, Padre Jorge Contreras 1300. Parque General San Mart\'in, 5502 Mendoza, Rep\'
ublica Argentina.}
\email{angelmza1990@gmail.com}

\begin{abstract}
  A superelliptic curve over a DVR $\Om$ of residual characteristic
  $p$ is a curve given by an equation $\C:y^n=f(x)$. The purpose of
  the present article is to describe the Galois representation
  attached to such a curve under the hypothesis that $f(x)$ has all
  its roots in the fraction field of $\Om$ and that $p \nmid n$. Our
  results are inspired on the algorithm given in \cite{wewers} but our
  description is given in terms of a cluster picture as defined in
  \cite{Tim}. 
\end{abstract}				
\maketitle

\section*{Introduction}

Galois representations play a crucial role in different aspects of
modern number theory. The main source of Galois representations are
the geometric ones, namely the ones obtained by looking at the \'etale
cohomology of varieties. Among varieties, the case of curves is the
easiest one, where one can understand the local $L$-function of a
curve at a prime of good reduction by counting the number of points of
the curve's equation over different finite fields. If the curve has
bad reduction at the prime $p$, understanding the image of inertia in
the $\ell$-adic representation (for $\ell \neq p$) is much
harder. However the situation is a little better when the curve is
``semistable''. There have been very important results in this
direction during the last years (see \cite{Tim2} and
\cite{DokGalois}). Specific results for hyperelliptic curves are given
in \cite{Tim} and for the so called \emph{superelliptic curves} in
\cite{wewers}.

Let $\Om$ be a complete DVR, $\pi$ be a local uniformizer, $K$ its
field of fractions and $k$ its residue field of characteristic $p$.

\begin{def*}
  An $n$-\emph{cyclic} or \emph{superelliptic} curve is a non-singular curve obtained as
  a cyclic cover of $\PP^1$, namely it is given by an equation of the
  form
\[
\C:y^n=f(x),
  \]
  where $f(x) \in \Om[x]$, $\Disc(f(x)) \neq 0$.
\end{def*}

Computing the local $L$-factor of $\C$ for primes of good reduction
can be done quite efficiently using for example the method described
in \cite{Drew}, which gives a full description of the attached Galois
representation in this case (as inertia acts trivially). For this
reason, the main purpose of the present article is to describe the
Galois image of inertia at $p$ of the $\ell$-adic representation of a
curve $\C$ for $\ell \neq p$ (assuming $p \nmid n$). As a first step
of this goal, we restrict to the case when $f(x)$ has all its roots in
$K$ (which is not a semistable situation, but closed to it). Over an
extension $K_2$ of $K$ the stable model contains many components
$\YY_t^{(i)}$ (the notation will be explained during the article)
hence the Galois representation splits as
\begin{equation}
  \label{eq:splitting}
V_{\ell}(\Jac(\C)) \simeq \bigoplus V_{\ell}(\YY_t^{(i)}) \oplus \bigoplus_i \St(2) \otimes \chi_i,  
\end{equation}
where $\St(2)$ is the Steinberg $2$-dimensional representation, and
the $\chi_i$ are characters. To get such a representation over $K$, we
first study the stable model over $K_2$, following the results of
\cite{wewers} (where a stable model $\Y$ of $\C$ is given). Our first
contribution is to relate their description to the more combinatorial
one given in \cite{Tim} for hyperelliptic curves in terms of
\emph{clusters}.

The way to get a stable model is to start with a stable model of a
marked projective line, and compute its normalization under the
natural projection of $\C$ to $\PP^1$ (giving the stable model
$\Y$). The use of clusters provides a stable model of $\PP^1$ as
explained in \cite{Tim} and does not depend on the degree of the cover
$n$. During the exposition, we make an explicit passage from clusters
to triples as considered in \cite{wewers}. 

The new phenomena appearing when $n > 2$ is that the normalization of
projective lines might not be irreducible. This is a very interesting
phenomena that we explain in Section~\ref{section:ssmodel}. Unlike the
hyperelliptic case, components not only might have positive genus
(hence each of them contribute to the first part of
(\ref{eq:splitting})), but the different components intersect between
themselves in a very combinatorial way. Understanding the intersection
points is crucial to describe the component graph of the special fiber
of $\Y$. Furthermore, the Galois action on the intersection points
give rise to the characters appearing in the second part of
(\ref{eq:splitting}) (i.e. are the characters twisting the Steinberg
representation).

Let us explain the organization of the article. Let $\R$ denote the
set of roots of $f(x)$. In this article we assume that $\R \subset K$
and $p \nmid n$. Since we are mostly concerned with the image of
inertia (and $p \nmid n$), we also assume that $\zeta_n \in K$. Let
$K_2 = K[\sqrt[n]{\tilde{\pi}}]$, where $\tilde{\pi}$ is a local
uniformizer of $K$. In the first part of the article, we assume
furthermore that $f(x)$ is a monic polynomial, in particular
$f(x) = \prod_{r \in \R}(x-r)$.

The first section recalls the description of the stable model
$(\X,\D)$ of the marked $\PP^1$ as given in \cite{wewers}. The second
section explains its relation to the cluster picture of \cite{Tim} via
a map from $(\X,\D)$ to proper clusters. In particular, we explain how
points relate to clusters, and how the different components intersect
(Theorem~\ref{thm:geometry}).

In Section~\ref{section:ssmodel} we describe the semistable model $\Y$
over $K_2$. In particular, we give a description of the number of
components in terms of the clusters and their cardinality
(Proposition~\ref{prop:numbercomponents}). The advantage of working
over $K_2$ is that the cluster picture itself is enough to describe the
representation. The last section explains how to compute the Galois
representation over $K$. For doing that, we explain the effect of
twisting the Galois representation attached to a superelliptic
curve. To describe the twist, we explain the decomposition of
$V_\ell(\Jac(\C))$ as a module not over $\ZZ_\ell$ but over
$\ZZ_\ell[\xi_n]$. There is a contribution of the Galois
representation coming from the curves
\[
\C_{n/d}: y^{n/d} = f(x),
  \]
  for each divisor $d$ of $n$. Then our Galois representation splits
  as a sum of what might be called the $d$-new contributions. In
  Section~\ref{section:repoverK} we give a precise description of how
  to compute the twist on each $d$-new subrepresentation.

  The advantage of working with general twists is that it allows to
  consider a general polynomial $f(x)$ (not necessarily monic) over
  $K$ (with the assumption $\R \subset K$). To understand the
  restriction of the Galois representation to the inertia subgroup, it
  is very convenient to work with the notion of weighted clusters as
  recalled in the same section. The complete weighted cluster picture
  contains information on whether the characters $\chi_i$ are
  ramified or not, as explained in the last part of the article.

\subsection*{Notation} Let us give a small list of the principal symbols that will be used in the article:
\begin{itemize}
  \item By $\pi$ we denote a local uniformizer of $\Om$, $k$ its residue field and  $v(x)$ denote the $\pi$-valuation of $x$.
\item The symbol $\R$ denotes the roots of the polynomial $f(x)$.
\item $(\X,\D)$ denotes the semistable model of the marked $(\PP^1,D)$ line and $\Y$ denotes its normalization in the function field of $\C$.
\item $\overline{X}$ denotes the special fiber of $\X$ and $\overline{Y}_t^{(\ell)}$ denote the components of the special fibers of $\Y$.
\end{itemize}

\subsection*{Acknowledgments} The cluster pictures were done using the macros of \cite{Tim}. We thank Tim Dokchitser for allowing us to use this package. 

\section{The minimal stable model $(\X,\D)$}
\label{section:components}

The method to compute the stable minimal model is as follows:
consider the cover $p:\C \to \PP^1$ obtained by sending $(x,y) \to
x$. This is a cyclic Galois cover of degree $n$ ramified precisely at
the points
  \[
    D = \sum_{r \in \R}[r] + \begin{cases}
      [\infty] & \text{ if }n \nmid \deg(f(x)),\\
      0 & \text{ otherwise.}
      \end{cases}
    \]
    Consider the marked curve $(\PP^1,D)$ and compute its minimal
    semistable model $(\X,\D)$. A semistable model of $\C$ is then
    obtained as the normalization $\Y$ of $\X$ in the
    function field of $\C$, i.e. $\Y$ fits in the following diagram
    \[
\xymatrix{ 
  \Y \ar@{..>}[r]\ar@{..>}[d] & \C \ar[d]^p\\
  \X \ar[r] & \PP^1}
      \]
      The stable model $(\X,\D)$ is obtained by gluing open affine
      lines blown up at a point in the special fiber of $\PP^1$ (as
      explained in \cite[Section 3]{Tim},\cite{wewers} and also
      \cite{MR1917232}). Recall the algorithm given in \cite{wewers} to
      compute $(\X,\D)$. Let
    \begin{equation}
      \label{eq:Sdef}
      S = \Sup(D) = \R \; \cup \; \begin{cases}
        \infty & \text{ if } n \nmid \deg(f(x)),\\
        \emptyset & \text{ otherwise}.
        \end{cases}
    \end{equation}
\begin{remark}
  The assumption that $\C$ has positive genus implies in particular
  that $S$ has at least $3$ elements, as this is always the case except
  when $f(x)$ has degree $1$ or when $f(x)$ has degree $2$ and $n=2$. In both
  cases the curve $\C$ has genus $0$.
  \label{rem:3roots}
 \end{remark}
 Let $T$ denote the set of triples of distinct elements of $S$.  The
 coordinate function of $t=(a,b,c) \in T$, is defined as
\begin{equation}
  \label{eq:varphidefinition}
\varphi_t(x)=\frac{(b-c)}{(b-a)} \frac{(x-a)}{(x-c)}.
\end{equation}
It corresponds to the M\"obius transformation sending $(a,b,c)$ to
$(0,1,\infty)$. Define the following equivalence relation in $T$: two
elements $t_1, t_2 \in T$ are equivalent (which we denote
$t_1 \sim t_2$) if the map $\phi_{t_2}\circ \phi_{t_1}^{-1}$ extends
to an automorphism of $\PP^1_{\Om}$ (i.e. it corresponds to a matrix
in $\PGL_2(\Om)$).

The semistable model $\X$ consists of one component (a projective
line) for each equivalence class. Furthermore, the special fiber
$\bar{X}$ of $\X$ is a tree of projective lines where each component
contains at least $3$ points (being either elements of $S$ or
singular points were two components meet).

For each $t \in T$ the map $\varphi_t$ extends to a proper
$\Om$-morphism $\varphi_t:\X \to \PP^1_{\Om}$, whose reduction
(denoted $\overline{\varphi_t}$) is a contraction morphism with
contracts all but one component of $\bar{X}$ to a closed point (see
\cite[Proposition 4.2]{wewers}). Furthermore, if $r \in \R$ then
$\overline{\varphi_t}(r) = \overline{\varphi_t(r)}$.
  Extend the valuation $v$ on $\Om$ by setting $v(\infty) = -\infty$.
%
  
\begin{lemma}
  The equivalence relation in $T$ satisfies the following properties:
  \begin{enumerate}
  \item The permutation of a triple $(a,b,c)$ is equivalent to $(a,b,c)$.
    
  \item Any triple is equivalent to one with $v(b-c)= v(a-c) \le v(a-b)$.
  \end{enumerate}
\end{lemma}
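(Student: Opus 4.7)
The common observation behind both parts is that for any triple $t$ the map $\varphi_t$ is characterized by $\varphi_t(t) = (0,1,\infty)$, so the composition $g := \varphi_{t_2}\circ\varphi_{t_1}^{-1}$ is automatically an element of $\PGL_2(K)$. The equivalence $t_1 \sim t_2$ then amounts to asking that $g$ admit a matrix representative in $\GL_2(\Om)$, i.e., that it extend to an automorphism of $\PP^1_\Om$. The strategy is to verify this integrality condition directly in $(1)$ and then use $(1)$ as a tool for normalizing an arbitrary triple in $(2)$.

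For part $(1)$ I plan to exploit the fact that if $t_2 = \sigma(t_1)$ for some $\sigma \in S_3$, then both $\varphi_{t_1}$ and $\varphi_{t_2}$ send the unordered set $\{a,b,c\}$ to $\{0,1,\infty\}$. Hence $g$ is a M\"obius transformation permuting $\{0,1,\infty\}$, and there are exactly six such transformations: $z$, $1-z$, $1/z$, $z/(z-1)$, $1/(1-z)$, $(z-1)/z$. Each comes from a matrix in $\GL_2(\ZZ)\subset \GL_2(\Om)$ with determinant $\pm 1$, so $g \in \PGL_2(\Om)$ as required. A brief table matching $\sigma$ to the corresponding transformation would make the argument concrete.

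For part $(2)$ I would invoke the non-archimedean \emph{isoceles-triangle} principle: the identity $(a-c) = (a-b) + (b-c)$ combined with the ultrametric inequality forces the minimum of the three valuations $v(a-b)$, $v(b-c)$, $v(a-c)$ to be attained by at least two of them (if only one were the minimum, two terms of a three-term sum equal to $0$ would dominate the third, a contradiction). The two minimum-attaining pairs share a common vertex; I would permute the triple $(a,b,c)$ so that this shared vertex is moved into the third slot. The permuted triple then satisfies $v(a'-c') = v(b'-c')$ equal to the minimum with $v(a'-b')$ no smaller, which is exactly the inequality requested. Part $(1)$ guarantees that the permuted triple is equivalent to the original, completing the argument.

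The only aspect that could conceivably cause trouble is verifying the integrality of the six matrices appearing in $(1)$, but since their determinants are $\pm 1$ this is immediate; beyond this, no real obstacle is expected, and the heart of the proof is simply the non-archimedean trichotomy applied to three valuations.
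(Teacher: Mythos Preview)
Your proposal is correct and follows essentially the same line as the paper: the paper dispatches part~(1) as ``a straightforward matrix computation'' and handles part~(2) by the same ultrametric isoceles-triangle argument (permuting so that the two smallest valuations sit in the $b-c$ and $a-c$ slots). Your treatment of~(1) is slightly slicker---rather than computing the matrices in terms of $a,b,c$, you observe that the composite must permute $\{0,1,\infty\}$ and that the six such M\"obius maps already have representatives in $\GL_2(\ZZ)$---but this is a cosmetic improvement rather than a different approach.
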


\begin{proof}
  The first assertion follows from a straightforward matrix
  computation. For the second one, note that the map from triples of
  elements to triples of rational numbers given by
  $(a,b,c) \to (v(b-c),v(a-c),v(a-b))$ is $S_3$ invariant, hence we
  can assume $v(b-c) \le v(a-c) \le v(a-b)$. But $a-b = (a-c) + (c-b)$
  hence
  \[
    v(a-b) \ge \min\{v(a-c),v(b-c)\}=v(b-c)
  \]
  with equality if both values are different. Then the assumption
  $v(b-c) \le v(a-c) \le v(b-c)$ implies that $v(a-c) = v(b-c)$.
\end{proof}

\begin{definition}
  An \emph{ordered triple} is a triple $(a,b,c)$ with $v(a-c) = v(b-c) \le v(a-b)$.
\end{definition}

If $(a,b,c)$ is an ordered triple, define its \emph{radius} to be $\mu = v(a-b)$.

\begin{prop}
  Let $(a,b,c)$ be an ordered triple. 
  \begin{enumerate}
  \item If $\infty \in S$ then any ordered triple $(a,b,c)$ is
    equivalent to the ordered triple $(a,b,\infty)$.
    
  \item The radius $\mu$ depends only on the equivalent class of the triple.
    
  \item The ordered triple $(a,b,c)$ is equivalent to the ordered
    triple $(\alpha,\beta,\gamma)$, if and only if the following
    two properties hold:
    \begin{itemize}
    \item they have the same invariant, i.e. $\mu = v(a-b) = v(\alpha-\beta)$,
      
    \item $a \equiv b \equiv \alpha \equiv \beta \pmod{\pi^\mu}$.
    \end{itemize}
      \end{enumerate}
\label{prop:equiv}
    \end{prop}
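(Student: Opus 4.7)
The plan is to prove all three parts together via explicit matrix calculations, using part (1) to reduce the general situation to the normalized form in which the third coordinate is $\infty$. A preliminary observation is that the equivalence relation $\sim$ depends only on the M\"obius maps $\varphi_t$ and therefore extends verbatim to triples of distinct elements of $\PP^1(K)$; in particular, $(a,b,\infty)$ is a legitimate reference triple even when $\infty\notin S$.

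For part (1), a direct computation from \eqref{eq:varphidefinition} yields
\[
\varphi_{(a,b,\infty)}\circ\varphi_{(a,b,c)}^{-1}(y)=\frac{(a-c)\,y}{-(b-a)\,y+(b-c)},
\]
with associated matrix $M=\begin{pmatrix}a-c & 0\\ -(b-a) & b-c\end{pmatrix}$ and $\det M=(a-c)(b-c)$. Setting $\nu=v(a-c)=v(b-c)$ and $\mu=v(a-b)\ge\nu$, the scaled matrix $\pi^{-\nu}M$ has entries in $\Om$ and unit determinant, so $\varphi_{(a,b,\infty)}\circ\varphi_{(a,b,c)}^{-1}\in\PGL_2(\Om)$.

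For parts (2) and (3), by the extended part (1) we may replace both triples by $(a,b,\infty)$ and $(\alpha,\beta,\infty)$. Then $\varphi_{(a,b,\infty)}(x)=(x-a)/(b-a)$ and
\[
\varphi_{(\alpha,\beta,\infty)}\circ\varphi_{(a,b,\infty)}^{-1}(y)=\frac{(b-a)\,y+(a-\alpha)}{\beta-\alpha},
\]
represented by the matrix $N=\begin{pmatrix}b-a & a-\alpha\\ 0 & \beta-\alpha\end{pmatrix}$. I then look for $\lambda\in K^{\times}$ with $\lambda N\in\GL_2(\Om)$: writing $\mu'=v(\alpha-\beta)$, the unit-determinant condition forces $v(\lambda)=-(\mu+\mu')/2$, and the integrality of the two diagonal entries $\lambda(b-a)$ and $\lambda(\beta-\alpha)$ then forces $\mu\ge(\mu+\mu')/2$ and $\mu'\ge(\mu+\mu')/2$, so $\mu=\mu'$. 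This proves (2) and the equal-radius condition of (3). With $v(\lambda)=-\mu$, the remaining integrality condition $v(\lambda(a-\alpha))\ge 0$ amounts to $v(a-\alpha)\ge\mu$; combined with $v(a-b)=v(\alpha-\beta)=\mu$ this is exactly the congruence $a\equiv b\equiv\alpha\equiv\beta\pmod{\pi^{\mu}}$ required in (3). The converse direction is obtained by reading the same chain of implications backwards.

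The only subtle point is the preliminary reduction: one must check that the equivalence relation is still well-defined on triples whose entries are allowed to lie outside $S$, so that part (1) can be invoked as a normalization step even when $\infty\notin S$. Once this mild extension is available, the whole proposition reduces to the two matrix calculations above, which are entirely routine.
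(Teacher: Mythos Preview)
Your proposal is correct and follows essentially the same route as the paper: an explicit matrix computation for $\varphi_{(a,b,\infty)}\circ\varphi_{(a,b,c)}^{-1}$ in part (1), followed by a reduction of (2)--(3) to the normalized triples $(a,b,\infty)$ and $(\alpha,\beta,\infty)$ and an analysis of when a scalar multiple of the resulting upper-triangular transition matrix lies in $\GL_2(\Om)$. The matrices you obtain agree with the paper's up to an overall sign, and your derivation of $\mu=\mu'$ from the integrality of the diagonal entries together with the unit-determinant constraint is in fact a slightly more explicit version of what the paper asserts in one sentence; the remaining congruence condition $v(a-\alpha)\ge\mu$ is handled identically.
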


\begin{proof}
  By the equivalence relation definition, an ordered triple
  $T_1=(a,b,c)$ (with $c \neq \infty$) is equivalent to a triple
  $T_2=(a, b,\infty)$ if and only if $\lambda_2 \circ \lambda_1^{-1}$
  extends to an automorphism of $\PP^1_{\Om}$, where $\lambda_i$ is
  the M\"obius transformation sending the triple $T_i$ to the triple
  $(0,1,\infty)$. The M\"obius matrix attached to such composition
  equals
  \[
[\lambda_2 \circ \lambda_1^{-1}] =  \frac{1}{(a-c)}\left( \begin{array}{cc}
\frac{(a-c)}{(b-c)} & 0\\ \frac{(a-b)}{(b-c)} & 1
                                           \end{array} \right).
                                       \]
       If we multiply the matrix by $(a-c)$ we get an invertible
       integral matrix, since $\frac{(a-c)}{(b-c)}$ is a unit (recall
       the definition of an ordered triple), hence both triples are
       indeed equivalent.

       To prove equivalence of ordered triples $(a,b,c)$,
       $(\alpha, \beta, \gamma)$, since we can add $\infty$ to the
       set, it is enough to restrict to the case $(a,b,\infty)$ and
       $(\alpha,\beta,\infty)$. It is easy to check that the
       transformation sending one triple to the other one is given by
       the matrix

\[
M=\left( \begin{array}{cc}
(a-b)& (\alpha-a)\\ 0  & (\alpha-\beta)
                                           \end{array} \right).
       \]
       For a multiple of $M$ to lie in $\GL_2(\Om)$, it must happen
       that $v(a-b) = v(\alpha-\beta)$, hence the two triples have the
       same \emph{radius}, as stated. At last, under such assumption,
       the two triples are equivalent if and only if
       $v(a-\alpha) \ge \mu$ (the radius of the triples). Recall that $\mu = v(a-b)$, so $a \equiv b \pmod{\pi^\mu}$ and the same holds for $\alpha$ and $\beta$, as stated.

\end{proof}

\begin{remark}
  Given a cyclic curve $\C:y^n = f(x)$, there are many transformations
  that preserve the model (for example translation). The combinatory
  behind the computation of a stable model of $\PP^1$ attached to the
  roots of $p(x)$ depends on the particular equation. However, the
  information obtained from it (number of components, discriminant,
  etc) does not.  
\end{remark}

Before stating the relation between equivalence classes of triples and clusters, let us illustrate the algorithm with an example.
\begin{ex1}
  \label{ex:1} Let $p$ be an odd prime number and $\C/\mathbb{Q}_p$ be the superelliptic curve given by the equation
\begin{small} \[ \C\!: \! y^6 \!= x(x-p^2)(x-p)(x-p-p^2)(x-2p)(x-2p-p^2)(x-1)(x-1-p)(x-1-2p) \] \end{small}
Then $\R = \lbrace 0, p^2, p, p+p^2, 2p, 2p+p^2, 1, 1+p, 1+2p \rbrace$ and $S = \R \cup \lbrace \infty \rbrace$ (since $6 \nmid \textrm{deg}(f) )$.
 By Proposition 1.4 any ordered
triple $(a,b,c)$ is equivalent to the ordered triple $(a,b,\infty)$,
and there are $36$ such triples. The radii are given in Table~\ref{table:radii}.

\begin{table}[h]
\scalebox{.7}{
\begin{tabular}{|l|c|c|c|c|c|c|c|c|c|c|c|c|c|}
\hline
  Pair & $(0,1)$ & $(0,1+p)$ & $(0,1+2p)$ & $(p^2,1)$ & $(p^2,1+p)$ & $(p^2,1+2p)$  \\
  \hline
  Radius & $0$ & $0$ & $0$ & $0$ & $0$& $0$ \\
  \hline
  Pair  & $(p,1)$ & $(p,1+p)$ & $(p,1+2p)$    &$(p+p^2,1)$ & $(p+p^2,1+p)$ &  $(p+p^2,1+2p)$ \\
  \hline
  Radius  &  $0$ & $0$ & $0$ & $0$ & $0$ & $0$\\
  \hline
  Pair & $(2p,1)$  & $(2p,1+p)$  & $(2p,1+2p)$ &  $(2p+p^2,1)$ & $(2p+p^2,1+p)$ & $(2p+p^2, 1+2p)$ \\
  \hline
  Radius  & $0$ & $0$& $0$ &  $0$ & $0$ & $0$\\
  \hline
  Pair   & $(0,p)$ & $(0,p+p^2)$ & $(0,2p)$ & $(0,2p+p^2)$ & $(p^2,p)$ & $(p^2,p+p^2)$  \\
  \hline
  Radius    & $1$ & $1$ & $1$ & $1$ & $1$& $1$ \\
  \hline
  
   Pair    & $(p^2,2p)$ & $(p^2,2p+p^2)$ & $(p,2p)$   &$(p,2p+p^2)$ & $(p+p^2,2p)$ &  $(p+p^2,2p+p^2)$ \\
  \hline
  Radius & $1$ & $1$ & $1$ & $1$ & $1$& $1$\\
  \hline
  
   Pair    & $(0,p^2)$  & $(p,p+p^2)$  & $(2p,2p+p^2)$ &  $(1,1+p)$ & $(1,1+2p)$ & $(1+p, 1+2p)$ \\
  \hline
  Radius & $2$ & $2$ & $2$ & $1$ & $1$& $1$\\
  \hline
  \end{tabular}}
\caption{}
\label{table:radii}
\end{table}

By Proposition 1.4 (3), all elements in the first three rows are equivalent, all elements in fourth and fifth rows are equivalent, the three first elements of the last row are not equivalent, and the last three elements in the last row are equivalent, hence there are six equivalent classes. The ordered
triples and the charts can be taken to be:
\begin{multicols}{2}
  \begin{itemize}
  \item $t_0=(0,1, \infty), \; \varphi_0(x)= x$
  \item $t_1=(0, p, \infty),\; \varphi_1(x)= \frac{x}{p}$
  \item $t_2=(0, p^2, \infty), \; \varphi_2(x)= \frac{x}{p^2}$
  \item $t_3=(p, p+p^2, \infty), \; \varphi_3(x)= \frac{x-p}{p^2}$ 
  \item $t_4=(2p, 2p+p^2, \infty), \;  \varphi_4(x)= \frac{x-2p}{p^2}$
    \item $t_5=(1, 1+p, \infty), \;  \varphi_4(x)= \frac{x-1}{p}$
  \end{itemize}
  \end{multicols}
%
%
Then the special fiber of $\X$ looks like Figure~\ref{fig:example1}.

\begin{figure}[h!]
\begin{tikzpicture}[xscale=1.2,yscale=1.2,
  l1/.style={shorten >=-1.3em,shorten <=-0.5em,thick}]

  \draw[l1] (0.6,0.00)--(3.60,0.00) node[scale=0.8,above left=-0.17em] {} node[scale=0.5,blue,below right=-0.5pt] {$X_m$};
\filldraw[black] (2.35,0) circle (2pt) node[scale=0.5,red,yshift=-1em] {$\infty$};
\draw[l1] (1.6,0.00)--node[right=-3pt] {} (1.6,2.0) node[xshift=.2cm, yshift=.2cm,font=\tiny, scale=0.8, blue] {$X_1$};
\draw[l1] (3.1,0.00)--node[right=-3pt] {} (3.1,2.0) node[xshift=-.2cm, yshift=.2cm,font=\tiny, scale=0.8, blue] {$X_5$};

\filldraw[black] (3.1,0.6) circle (2pt) node[scale=0.5,red,xshift=1cm] {$1+2p$};
\filldraw[black] (3.1,1.2) circle (2pt) node[scale=0.5,red,xshift=1cm] {$1+p$};
\filldraw[black] (3.1,1.8) circle (2pt) node[scale=0.5,red,xshift=1cm] {$1$};

\draw[l1] (0,0.6)--node[right=-3pt] {} (1.8,0.6) node[scale=0.8,blue,below right=-0.5pt,font=\tiny] {$X_4$};

\filldraw[black] (0.2,0.6) circle (2pt) node[scale=0.5,red,yshift=-1em] {$2p$};
\filldraw[black] (1.1,0.6) circle (2pt) node[scale=0.5,red,below] {$2p+p^2$};

\draw[l1] (0,1.2)--node[right=-3pt] {} (1.8,1.2) node[scale=0.8,blue,below right=-0.5pt,font=\tiny] {$X_3$};

\filldraw[black] (0.2,1.2) circle (2pt) node[scale=0.5,red,yshift=-1.0em] {$p$};
\filldraw[black] (1.1,1.2) circle (2pt) node[scale=0.5,red,below] {$p+p^2$};

\draw[l1] (0,1.8)--node[right=-3pt] {} (1.8,1.8) node[scale=0.8,blue,below right=-0.5pt,font=\tiny] {$X_2$};

\filldraw[black] (0.2,1.8) circle (2pt) node[scale=0.5,red,yshift=-1em] {$0$};
\filldraw[black] (1.1,1.8) circle (2pt) node[scale=0.5,red,below] {$p^2$};
\end{tikzpicture}
  \caption{Special fiber of $\X$}
  \label{fig:example1}
\end{figure}
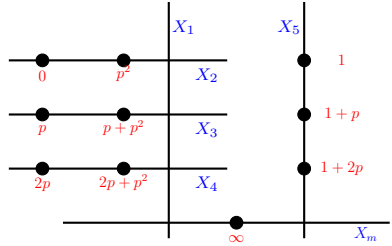
\end{ex1}
%
\section{Clusters and their relation with $\X$}
Clusters were defined in \cite{Tim} to study hyperelliptic curves. We
strongly recommend the reader to look at such article and also the
expository article \cite{2007.01749}. We follow closely the definition and
notations presented in such references. 
  \begin{definition}
    A \emph{cluster} $s$ is a non-empty subset of $\R$ of the form
    $\s = D(z,d)\cap \R$, for some disc
    $D(z,d)=\{x \in \overline{K} \; : \; v(x-z) \ge d\}$ where 
    $z \in \overline{K}$ and $d \in \QQ$. A \emph{proper} cluster
    is a cluster with more than one element.
  \end{definition}
  We will mostly be interested in proper clusters for most of the
  discussions. Let $\Cl(\R)$ denote the set of proper clusters of
  $\R$. For a cluster $\s$, let $|\s|$ denote the number of elements
  of $\R$ contained in $\s$.

  \begin{lemma}
Given $\s_1,\s_2$ clusters, then either they are disjoint or one is contained in the other. 
  \end{lemma}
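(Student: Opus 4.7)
The plan is to reduce the statement about clusters to the analogous statement about the underlying ultrametric discs, and the latter is a standard consequence of the ultrametric inequality $v(x+y) \geq \min\{v(x),v(y)\}$ in $\overline{K}$.

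First I would write $\s_1 = D(z_1,d_1)\cap \R$ and $\s_2 = D(z_2,d_2)\cap \R$ using the definition, and assume without loss of generality that $d_1 \leq d_2$. The main intermediate claim is: if $D(z_1,d_1) \cap D(z_2,d_2) \neq \emptyset$, then $D(z_2,d_2) \subseteq D(z_1,d_1)$. To see this, pick any $w$ in the intersection, and note that for any $y \in D(z_2,d_2)$ one has $v(y-z_1) \geq \min\{v(y-z_2),\, v(z_2-w),\, v(w-z_1)\} \geq \min\{d_2,d_2,d_1\} = d_1$, which places $y$ in $D(z_1,d_1)$.

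Next I would apply this to the cluster situation. Assume $\s_1 \cap \s_2 \neq \emptyset$ and pick $r \in \s_1 \cap \s_2 \subseteq \R$. Then $r$ is a common point of the two discs, so by the intermediate claim $D(z_2,d_2) \subseteq D(z_1,d_1)$. Intersecting both sides with $\R$ gives $\s_2 \subseteq \s_1$. Conversely, if the two discs are disjoint then certainly $\s_1 \cap \s_2 = \emptyset$. This gives the dichotomy.

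Since the whole argument rests on the ultrametric inequality, there is no real obstacle; the only mild subtlety is making sure that the existence of a common \emph{element of $\R$} (not just of $\overline{K}$) is enough to conclude, which is automatic because a single common point of the discs is all one needs to run the nesting argument, and any point in $\s_1\cap \s_2$ qualifies.
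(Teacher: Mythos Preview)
Your argument is correct and is essentially the same as the paper's: the paper condenses everything into the single observation that in an ultrametric disc any interior point can serve as center (so a common point $r$ lets one rewrite both discs as $D(r,d_1)$ and $D(r,d_2)$, whence one contains the other), whereas you unpack this same fact explicitly via the chain $v(y-z_1)\geq \min\{v(y-z_2),v(z_2-w),v(w-z_1)\}$. Both rest on the ultrametric inequality, and your version simply spells out what the paper cites.
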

  \begin{proof}
    The result follows from the fact that any point inside the disc
    defining a cluster can be taken as the ball center (see \cite[Section
    1.5]{Tim}).
  \end{proof}

  \begin{definition}
    If $\s, \s'$ are clusters with $\s' \subsetneq \s$ a maximal
    subcluster, we write $\s' < \s$ and refer to $\s'$ as a
    \emph{child} of $\s$ and $\s$ as a \emph{parent} of $\s'$. 
  \end{definition}

  To a proper cluster $\s$ we associate its \emph{diameter}
  $\mu(\s) = \min\{v(z-t) \; : \; z,t \in \s\}$ (in \cite{Tim} the
  authors use term \emph{depth} for such invariant).

  \begin{lemma}
    Let $\s$ be a proper cluster, and let $a,b \in \s$ two elements satisfying that $v(a-b) = \mu(\s)$. Then $\s = D(a,{\mu(\s)}) \cap \R$.
  \end{lemma}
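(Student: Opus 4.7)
The plan is to use two ingredients: the non-archimedean ``center invariance'' of discs (any point of a disc can be taken as its center, which was already invoked in the previous lemma), and the definition of $\mu(\s)$ as a minimum of valuations of differences.

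First I would use center invariance to rewrite $\s$ in a form centered at $a$. By definition of a cluster, there exist $z \in \overline{K}$ and $d \in \QQ$ with $\s = D(z,d)\cap \R$. Since $a \in \s \subseteq D(z,d)$, the ultrametric inequality forces $D(z,d) = D(a,d)$, so we may write
\[
\s = D(a,d)\cap \R.
\]
This puts both descriptions of $\s$ on the same footing and reduces the lemma to showing $\mu(\s) = d$ (or rather, that replacing $d$ by $\mu(\s)$ in the formula cuts out the same intersection with $\R$).

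Next I would prove both inclusions. For $\s \subseteq D(a,\mu(\s))\cap \R$, take $x \in \s$; since $a$ and $x$ both lie in $\s$, the definition of $\mu(\s)$ as a minimum gives $v(x-a) \ge \mu(\s)$, so $x \in D(a,\mu(\s))$, and of course $x \in \R$. For the reverse inclusion, the hypothesis $v(a-b) = \mu(\s)$ combined with $a,b \in D(a,d)$ yields $\mu(\s) = v(a-b) \ge d$; hence any $x \in \R$ with $v(x-a) \ge \mu(\s)$ also satisfies $v(x-a) \ge d$, placing $x$ in $D(a,d)\cap \R = \s$.

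There is no real obstacle here: the whole argument is a direct application of the ultrametric inequality and of the definitions of $\mu(\s)$ and of a cluster. The only point one must be slightly careful about is to justify that the original center $z$ and depth $d$ can be replaced by $(a,d)$; this was established in the proof of the previous lemma by citing \cite[Section 1.5]{Tim}, so it is legitimate to invoke it directly.
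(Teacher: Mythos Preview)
Your proof is correct and follows essentially the same route as the paper's: both re-center the disc defining $\s$ at $a$, then establish the two inclusions using the definition of $\mu(\s)$ as a minimum together with the inequality $\mu(\s)\ge d$. (The paper writes this last inequality as $d\ge\mu(\s)$, which appears to be a typo; your direction is the one that actually yields $D(a,\mu(\s))\cap\R\subseteq D(a,d)\cap\R=\s$.)
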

  \begin{proof}
    Clearly, $v(a-b) \ge \mu(\s)$ for all $b \in \s$, hence
    $\s \subset D(a,{\mu(\s)}) \cap \R$. For the other inclusion, by
    definition $s = D(\alpha,d) \cap \R$, for some $\alpha$,
    $d$. Since $a \in \s$, $a \in D(\alpha,d)$, so we can take it as
    center, i.e. $s = D(a,d) \cap \R$. But $\mu(\s)$ is the minimal
    valuation between elements in $s$ hence $d \ge \mu(\s)$ and
    $D(a,\mu(\s)) \cap \R \subset \s$.
  \end{proof}
  \begin{definition}
  Let $\sm$ be the maximal cluster, i.e. the cluster containing
  all other clusters and all elements of $S$.  
  \end{definition}
  
 
  Let $(a,b,c)$ be an ordered triple in $T$, and let $\mu = v(a-b)$ be
  its invariant. Define a map $\Phi: T \to \Cl(\R)$ by
  \begin{equation}
    \label{eq:bijection}
    \Phi((a,b,c)) = D_{\mu}(a) \cap \R.
  \end{equation}
  \begin{thm}
    The map $\Phi$ gives a well defined map between the
    equivalence classes of triples in $T/\sim$ and the set of clusters of
    $\R$. Furthermore, the map $\Phi$ satisfies the following
    properties:
    \begin{enumerate}
      
    \item It is injective.
    \item The set $\Cl(\R) \setminus \{\sm\} \subset \im(\Phi)$.      
    \item The cluster $\sm$ lies in the image of $\Phi$ if either one of the following properties hold:
      \begin{enumerate}
      \item[i.] The element $\infty \in S$,        
      \item[ii.] there are three different elements $a,b,c \in \R$
        satisfying $$\mu(\sm) = v(a-b) = v(b-c) = v(a-c)$$ (equivalently,
        $\sm$ has more than two childs).
      \end{enumerate}
    \end{enumerate}
\label{thm:mapPhi}
  \end{thm}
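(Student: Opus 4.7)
The plan is to prove the four claims in order, relying on Proposition~\ref{prop:equiv}(3) together with the preceding lemma identifying a proper cluster $\s$ with the disc $D_{\mu(\s)}(a)\cap\R$ centered at any of its points.

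For well-definedness of $\Phi$ on $T/\!\sim$, suppose $(a,b,c)\sim(\alpha,\beta,\gamma)$ are equivalent ordered triples. Proposition~\ref{prop:equiv}(3) yields a common radius $\mu$ and congruences $a\equiv b\equiv\alpha\equiv\beta\pmod{\pi^{\mu}}$, so $D_{\mu}(a)=D_{\mu}(\alpha)$ and both triples map to the same cluster. For injectivity (claim (1)), assume $\Phi([(a,b,c)])=\Phi([(\alpha,\beta,\gamma)])=\s$. Because $a\in\s$, the preceding lemma gives $\s=D_{\mu(\s)}(a)\cap\R$, so the equality $D_{\mu}(a)\cap\R=D_{\mu(\s)}(a)\cap\R$ forces $\mu=\mu(\s)$: indeed $a,b\in\s$ implies $\mu\ge\mu(\s)$, while $\mu>\mu(\s)$ would exclude from $D_{\mu}(a)$ a pair of $\s$ realizing the diameter, contradicting the ultrametric inequality. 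The same holds for the second triple, so $a,b,\alpha,\beta$ all lie in $D_{\mu(\s)}(a)$ and Proposition~\ref{prop:equiv}(3) returns the equivalence.

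For claim (2), given a proper cluster $\s\neq\sm$, pick $a,b\in\s$ realising $v(a-b)=\mu(\s)$ and any $c\in S\setminus\s$; such a $c$ exists because $\s\subsetneq\sm$ and $\sm$ exhausts $S$. Since $c\notin D_{\mu(\s)}(a)$ one has $v(a-c)<\mu(\s)$, and the identity $a-c=(a-b)+(b-c)$ combined with the ultrametric inequality gives $v(a-c)=v(b-c)$, so $(a,b,c)$ is an ordered triple of radius $\mu(\s)$ with $\Phi((a,b,c))=D_{\mu(\s)}(a)\cap\R=\s$. For claim (3), in case (i) take $a,b\in\R$ with $v(a-b)=\mu(\sm)$; then $(a,b,\infty)$ is ordered since $v(a-\infty)=v(b-\infty)=-\infty$, and $\Phi$ sends it to $\sm$. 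In case (ii) the three given elements satisfy the ordered-triple condition with equality throughout, so $\Phi((a,b,c))=\sm$. The parenthetical equivalence in (ii) follows by describing the children of $\sm$ as the equivalence classes of $\sm\cap\R$ under $x\sim y\iff v(x-y)>\mu(\sm)$: three pairwise non-equivalent elements are exactly three elements whose pairwise valuations all equal $\mu(\sm)$.

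The hardest part should be the injectivity step, specifically pinning down the identification $\mu=\mu(\s)$ from the single hypothesis that the disc intersected with $\R$ equals $\s$; once that identification is in hand, every remaining step reduces to a direct application of Proposition~\ref{prop:equiv}(3) and the ultrametric triangle inequality.
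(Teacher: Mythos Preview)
Your argument is correct and follows essentially the same route as the paper: well-definedness and injectivity via Proposition~\ref{prop:equiv}(3), together with the identification $\mu=\mu(\s)$ (which the paper records as equation~\eqref{eq:mufromcluster} and you derive by the same ultrametric reasoning), and the surjectivity statements by exhibiting explicit ordered triples. One small remark: the paper's proof goes slightly beyond the stated ``if'' in part~(3) and also shows that when $\infty\notin S$ and $\sm$ has exactly two children then $\sm\notin\im(\Phi)$; this converse is used later (e.g.\ in Theorem~\ref{thm:geometry} and Example~2), so you may want to include it even though it is not literally part of the statement. Also, a tiny slip: $\sm=\R$, which need not exhaust $S$; but since $\s\subsetneq\R\subseteq S$ your choice of $c\in S\setminus\s$ is still available.
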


  \begin{proof} For $\Phi$ to descend to a map on the quotient
    $T/\sim$, we need to prove that if $t_1=(a,b,c)$ and
    $t_2=(\alpha,\beta,\gamma)$ are equivalent ordered triples of $T$
    then $\Phi(t_1) = \Phi(t_2)$. By
    Proposition~\ref{prop:equiv}, the condition $t_1 \sim t_2$ implies
    that $\mu =v(a-b) =v(\alpha-\beta)$ and
    $a \equiv b \equiv \alpha \equiv \beta \mod \pi^\mu$. Then
    $\alpha \in D_{\mu}(a)$ hence $D_{\mu}(a) = D_{\mu}(\alpha)$ and
    $\Phi(t_1) = \Phi(t_2)$.
    \begin{enumerate}
    \item {\bf Injectivity:} let $t_1 = (a,b,c)$ and
    $t_2 = (\alpha,\beta,\gamma)$ be two ordered triples such that
    $\Phi(t_1) = \Phi(t_2)$. Note that
    \begin{equation}
      \label{eq:mufromcluster}
      v(a-b) = \mu(\Phi(t_1))=\min\{v(z-t) \; : \; z,t \in \Phi(t_1)\}.      
    \end{equation}
    Then we can recover the invariant $\mu$ of the triple $t_1$ as the
    diameter of $\Phi(t_1)$. Since $\Phi(t_1) = \Phi(t_2)$, the $\mu$
    invariant of $\Phi(t_2)$ equals that of $\Phi(t_1)$. On the other
    hand, since $\Phi(t_1) = \Phi(t_2)$,
    $\{\alpha,\beta\} \subset \Phi(t_2)$ so
    $a \equiv b \equiv \alpha \equiv \beta \pmod{\pi^\mu}$ and
    $t_1 \sim t_2$ by Proposition~\ref{prop:equiv}.
  \item Let $\s \in \Cl(\R)$ be a proper cluster which is not
    maximal. Let $a,b \in s$ be a pair such that $v(a-b) =
    \mu(s)$. Clearly $\s = D_{\mu}(a) \cap \R$. Let $c \not \in s$,
    then $\s = \Phi(a,b,c)$.

  \item As before, given $\sm$, let $a,b \in \sm$ be a pair such that
    $v(a-b) = \mu(\sm)$. 
    \begin{enumerate}
    \item [i.]  If $\infty \in S$, then $\sm = \Phi(a,b,\infty)$.
      
    \item [ii.] If there exists $a,b,c \in \sm$ with $v(a-b)=v(a-c)=v(b-c)$ then $\sm = \Phi(a,b,c)$.
    \end{enumerate}
    Finally, if $\sm$ has precisely two childs, we need to
    prove it does not lie in the image. Let $\s_1$ and $\s_2$ be the
    maximal subclusters. Any triple $(a,b,c)$ satisfies (without loss of
    generality) that two elements lie in $\s_1$ and the other in $\s_2$
    or the three of them lie in $\s_1$. In both cases it is easy to
    check that $\Phi(a,b,c) \subset \s_1$, hence $\sm$ is not in the image.
  \end{enumerate}
  \end{proof}
%
  \begin{ex2}
    \label{ex:nomaximal}
    The case when $\sm$ is not in the image of $\Phi$ corresponds to a
    model $(\X,\D)$ with precisely two lines intersecting in a single
    point. For example, if $p>3$ is a prime number, the curve with equation:
    \[
\C:y^6 = x(x-p)(x-1)(x-1+p)(x-1+2p)(x-1+3p).
    \]
    In Figure~\ref{pic:nomaximal} we present its cluster picture as well as the model $(\X,\D)$.
\begin{figure}
\centering
\begin{minipage}{.5\textwidth}
\centering
\begin{figure}[H]
\begin{tikzpicture}[xscale=1.2,yscale=1.2,
  l1/.style={shorten >=-1.3em,shorten <=-0.5em,thick}]

  \draw[l1] (0.6,0.00)--(3.60,0.00) node[scale=0.8,above left=-0.17em] {} node[scale=0.5,blue,below right=-0.5pt] {};
\filldraw[black] (2,0) circle (2pt) node[scale=0.5,red,yshift=-.5cm] {$1$};
\filldraw[black] (2.5,0) circle (2pt) node[scale=0.5,red,yshift=-0.5cm] {$1+p$};
\filldraw[black] (3,0) circle (2pt) node[scale=0.5,red,yshift=-0.5cm] {$1+2p$};
\filldraw[black] (3.5,0) circle (2pt) node[scale=0.5,red,yshift=-0.5cm] {$1+3p$};

\draw[l1] (1.6,0.00)--node[right=-3pt] {} (1.6,1.2) node[xshift=.2cm, yshift=.2cm,font=\tiny, scale=0.8, blue] {};
\filldraw[black] (1.6,0.6) circle (2pt) node[scale=0.5,red,xshift=0.5cm] {$0$};
\filldraw[black] (1.6,1.2) circle (2pt) node[scale=0.5,red,xshift=0.5cm] {$p$};

\end{tikzpicture}
\caption{Special fiber of $\X$}
\label{pic:nomaximal}
\end{figure}
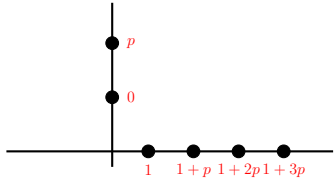
\end{minipage}%
\begin{minipage}{.5\textwidth}
\centering
\begin{figure}[H]
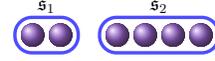

\centering
$$
\scalebox{1.6}{\clusterpicture
 \Root {1} {first} {r1};
  \Root {} {r1} {r2};
  \Root {3} {r2} {r3};
   \Root {} {r3} {r4};
    \Root {} {r4} {r5};
  \Root {} {r5} {r6}
   \ClusterLDName c1[][][\s_1] = (r1)(r2);
  \ClusterLDName c2[][][\s_2] = (r3)(r4)(r5)(r6);
\endclusterpicture}
$$
\caption{Attached clusters}
\end{figure}
\end{minipage}
\label{fig:nomaximal}
\end{figure}
\end{ex2}

Following the description in Section~\ref{section:components}, we know
that the components of $\X$ correspond to elements in $T/\sim$, hence
to get a complete description of $\X$ we need to understand how the
components intersect with each other and how the points of $S$
distribute between the components. Representing elements as clusters
gives the natural answer, namely in general two components will
intersect precisely when the cluster attached to one of them is a
child of the other. More concretely,

\begin{thm} The components attached to clusters $\s_1,\s_2$ in the
  image of $\Phi$ intersect if and only if one of the following holds:
    \begin{enumerate}
    \item     $\s_1$ is a maximal subcluster of
      $\s_2$ or vice-versa, or
      
    \item $\s_1 \cap \s_2 = \emptyset$ and both $\s_1, \s_2$ are maximal clusters.
    \end{enumerate}
      \label{thm:geometry}    
  \end{thm}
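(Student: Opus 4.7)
My plan is to analyse the structure of $\overline{X}$ by computing the reduction behaviour of each coordinate chart $\varphi_{t_\s}$ on the elements of $S$, and then to exploit the tree structure of $\overline{X}$ to rule out intersections between non-adjacent components. For each cluster $\s \in \im(\Phi)$ I would fix an ordered triple $t_\s=(a,b,c)$ with $a,b\in\s$, $v(a-b)=\mu(\s)$, and $c\notin\s$ (the case $\s=\sm$ being handled via the third point supplied by Theorem~\ref{thm:mapPhi}(3)). The key calculation is $v(\varphi_{t_\s}(r))=v(r-a)-\mu(\s)$ for $r\in\R$: this shows that the reduction $\overline{\varphi_{t_\s}(r)}$ is a finite point of $\overline{X}_\s$ iff $r\in\s$, that two such reductions coincide iff the roots lie in a common proper subcluster of $\s$, and that every element of $S\setminus\s$ reduces to $\overline{\varphi_{t_\s}(c)}=\infty$. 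Consequently the special points of $\overline{X}_\s$ are in bijection with the children of $\s$, together with a single ``parent marker'' at $\infty$ when $\s\ne\sm$.

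For the forward direction in case (1), suppose $\s_1<\s_2$. All elements of $\s_1$ reduce on $\overline{X}_{\s_2}$ to one common point $P$; the change of coordinates $\varphi_{t_{\s_1}}\circ\varphi_{t_{\s_2}}^{-1}$, whose matrix has the same shape as in the proof of Proposition~\ref{prop:equiv}, has determinant of valuation $\mu(\s_1)-\mu(\s_2)>0$ and does not lie in $\PGL_2(\Om)$. The usual blow-up interpretation then identifies $\overline{X}_{\s_1}$ with the exceptional fibre over $P$, so the two components meet at $P$ (which in turn corresponds to the parent marker $\infty$ on $\overline{X}_{\s_1}$). For case (2), Theorem~\ref{thm:mapPhi}(3) forces $\infty\notin S$ and $\sm$ to have exactly two children $\s_1,\s_2$; a hypothetical component $\overline{X}_{\sm}$ would carry only these two distinguished points, violating the requirement of at least three special points per component, and must therefore be contracted, forcing $\overline{X}_{\s_1}$ and $\overline{X}_{\s_2}$ to glue directly. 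An explicit computation of $\varphi_{t_{\s_1}}\circ\varphi_{t_{\s_2}}^{-1}$ then confirms that the glueing occurs at a single point.

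For the converse, assume $\overline{X}_{\s_1}\cap\overline{X}_{\s_2}\ne\emptyset$ while neither (1) nor (2) holds. Then necessarily $\s_1\cap\s_2=\emptyset$, and there exists an intermediate cluster $\s\in\im(\Phi)$ separating $\s_1$ and $\s_2$ in the cluster tree---either a strict common ancestor, or $\sm$ itself when $\sm\in\im(\Phi)$. By the first paragraph the reductions of $\s_1$ and $\s_2$ on $\overline{X}_\s$ are two distinct special points, so the already established adjacencies place $\overline{X}_{\s_1}$ and $\overline{X}_{\s_2}$ in different connected components of $\overline{X}\setminus\overline{X}_\s$, contradicting the assumed intersection.

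The main obstacle I expect lies in the converse when the natural intermediate cluster would have been $\sm$ but $\sm\notin\im(\Phi)$: here there is no component available to separate $\overline{X}_{\s_1}$ from $\overline{X}_{\s_2}$, so the separation argument must instead be run by induction on the depth of the cluster tree, combined with the contraction picture of the previous paragraph, in order to rule out unexpected ``shortcuts'' through the component graph.
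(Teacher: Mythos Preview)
Your overall strategy matches the paper's: the valuation formula $v(\varphi_{t_\s}(r))=v(r-a)-\mu(\s)$ is exactly the content of Lemma~\ref{lemma:rootevaluation}, and your separation principle---find an intermediate cluster $\s$ whose chart sends $\s_1$ and $\s_2$ to distinct points, hence to distinct branches of the tree---is the paper's Proposition~\ref{prop:intersection}. The paper handles the forward direction slightly differently (rather than invoking blow-ups it shows that no separating chart exists), but both arguments are sound.

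There is, however, a genuine gap in your converse. You write ``Then necessarily $\s_1\cap\s_2=\emptyset$'', but this is false: if $\s_1\subsetneq\s_2$ with $\s_1$ \emph{not} a maximal subcluster of $\s_2$, then neither (1) nor (2) holds and yet $\s_1\cap\s_2=\s_1\ne\emptyset$. You must treat this nested case separately, taking any $\s_3$ with $\s_1\subsetneq\s_3\subsetneq\s_2$ as the separator (on $\overline{X}_{\s_3}$ the cluster $\s_1$ reduces to a finite point while $\s_2$, containing $\s_3$, reduces to $\infty$). This is precisely one of the bullet points in the paper's proof.

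Finally, the ``main obstacle'' you flag is not one. When $\sm\notin\im(\Phi)$ there are exactly two maximal proper clusters $\s_1',\s_2'$, and if $\s_1,\s_2$ are disjoint with (2) failing, then at least one of them, say $\s_1$, is strictly contained in its maximal ancestor $\s_1'$. Then $\s_1'\in\im(\Phi)$ serves as the separator: on $\overline{X}_{\s_1'}$ the cluster $\s_1$ lands at a finite point and $\s_2$ (lying outside $\s_1'$) lands at $\infty$. No induction or extra contraction argument is needed.
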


  The second case corresponds precisely to the case explained in
  Example~\ref{ex:nomaximal}. The statement is implicit in \cite{Tim}
  (see Section 5 and Theorem 1.10) as well as in \cite{wewers}, but we
  present a different proof which depends on understanding especial
  points on clusters and the coordinate functions evaluated at them.
  
  \begin{definition}
    Let $\s \in \Cl(\R)$. A \emph{point} of $\s$ is one of the following:
    \begin{itemize}
    \item a child of $\s$ (i.e. maximal subclusters of $\s$),
      
    \item a parent of $\s$ (i.e. a minimal supercluster of $\s$),
      
    \item If $\infty  \in S$, one point (denoted $\infty$) in $\sm$.
      
    \item If $\sm \not \in \im(\Phi)$ one extra point in each maximal
      cluster of $\im(\Phi)$ (corresponding to the intersection point
      of the two components, see
      Example~\ref{ex:nomaximal}).
    \end{itemize}
  \end{definition}
  Note that we did not ask the clusters to be proper while defining
  points. In particular, the elements of $S$ will be points in some
  clusters.  The component graph of the special fiber $\bar{X}$ of
  $\X$ is a stably marked tree (see section 4.2 of \cite{wewers}). In
  particular, each cluster contains at least $3$ points.
   Let $\pts$ be
  the disjoint union of points in proper clusters $\s \in \Cl(\R)$,
  i.e.
  \[
\pts = \bigsqcup_{\s \in \Cl(\R)} \{\text{points in }\s\}.
\]
In particular, the set $\R \subset \pts$.
%
%
%
%
%
    To make computations easier, we can (and will) assume that if $\s$
    is a non-maximal cluster, then the ordered triple $t=(a,b,c) \in T$
    mapping to it via $\Phi$ satisfies that its last coordinate does
    not lie in $\s$ (this is always the case up to an equivalent
    triple). Furthermore, if $\infty \in S$, we assume that
    $c = \infty$.

  \begin{lemma}
  Let $t \in T$, and $x_1,x_2 \in \R$ be roots. Let $\s = \Phi(t)$ be the
  associate cluster. Then the coordinate function $\varphi_t$ satisfies:
  \begin{enumerate}
  \item if there exists a proper subcluster $\tilde{\s} \subsetneq \s$ such that $x_1, x_2 \in \tilde{\s}$ then $\overline{\varphi_t(x_1)}=\overline{\varphi_t(x_2)}$.
  \item if $x_1, x_2 \in \s$ but they do not lie in a common maximal
    subcluster then
    $\overline{\varphi_t(x_1)}\neq \overline{\varphi_t(x_2)}$.
    
  \item if $x_1 \not \in \s$ then $\overline{\varphi_t(x_1)}=\infty$.
  \end{enumerate}
\label{lemma:rootevaluation}
\end{lemma}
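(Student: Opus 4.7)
The plan is to pick a convenient representative of the triple class and reduce each assertion to a single valuation computation. First I would exploit the freedom in choosing the ordered triple $t=(a,b,c)$ with $\Phi(t)=\s$ to arrange that $c\notin\s$ (as the text already noted, this is always possible up to equivalence). Set $\mu=\mu(\s)=v(a-b)$ and $\nu=v(a-c)=v(b-c)$. The ordered triple condition gives $\nu\le\mu$, and the requirement $c\notin\s=D_\mu(a)\cap\R$ forces $\nu<\mu$ strictly, since $\nu=\mu$ would put $c$ inside $D_\mu(a)\cap\R$.

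Next I would compute the relevant combination of $\varphi_t$ by hand. A direct simplification of the difference of two M\"obius values yields
\[
\varphi_t(x_1)-\varphi_t(x_2)\;=\;\frac{(b-c)(a-c)(x_1-x_2)}{(b-a)(x_1-c)(x_2-c)}.
\]
For any $x\in\s$ one has $v(x-a)\ge\mu>\nu=v(a-c)$, so the ultrametric inequality gives $v(x-c)=\nu$. Substituting the valuations yields the clean identity
\[
v\bigl(\varphi_t(x_1)-\varphi_t(x_2)\bigr)\;=\;v(x_1-x_2)-\mu.
\]

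From here parts (1) and (2) fall out directly. In case (1), $x_1,x_2\in\tilde\s\subsetneq\s$ implies $v(x_1-x_2)\ge\mu(\tilde\s)>\mu$, so the difference has positive valuation and the reductions coincide. In case (2), the hypothesis that no common maximal subcluster contains both $x_i$ forces $v(x_1-x_2)=\mu$: otherwise the disc $D_{v(x_1-x_2)}(x_1)\cap\R$ would be a proper subcluster of $\s$ containing both $x_1$ and $x_2$, hence sitting inside one maximal subcluster of $\s$. Therefore the difference has valuation $0$ and the two reductions differ.

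For part (3) I would simply bound $v(\varphi_t(x_1))=\nu+v(x_1-a)-\mu-v(x_1-c)$. Since $x_1\notin\s$, one has $v(x_1-a)<\mu$, and splitting on the comparison of $v(x_1-a)$ with $\nu$ pins down $v(x_1-c)$ exactly when these differ, and gives only an inequality $v(x_1-c)\ge\nu$ in the borderline case $v(x_1-a)=\nu$. A short case check shows $v(\varphi_t(x_1))\le\nu-\mu<0$ in every subcase, whence $\overline{\varphi_t(x_1)}=\infty$. The only real bookkeeping concern is this last case analysis together with the degenerate situation $c=\infty$, where the formula collapses to $\varphi_t(x)=(x-a)/(b-a)$ and the same estimates trivially persist; beyond that, the argument is essentially forced by the ordered-triple hypothesis plus the choice $c\notin\s$.
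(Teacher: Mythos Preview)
Your approach is essentially the paper's: compute $\varphi_t(x_1)-\varphi_t(x_2)$ explicitly and read off the valuation. The execution is clean, and your formula $v(\varphi_t(x_1)-\varphi_t(x_2))=v(x_1-x_2)-\mu$ is exactly what drives parts (1) and (2). One small slip: in part (3), the bound ``$\le\nu-\mu$'' does not hold in the subcase $\nu<v(x_1-a)<\mu$, where you get $v(\varphi_t(x_1))=v(x_1-a)-\mu$, which lies strictly between $\nu-\mu$ and $0$. The conclusion $<0$ is still fine, so this is cosmetic.

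The genuine gap is your opening reduction. You assert that one can always arrange $c\notin\s$, citing the text, but the text only says this for \emph{non-maximal} $\s$ (and takes $c=\infty$ when $\infty\in S$). If $\s=\sm$ and $\infty\notin S$, every element of $S$ lies in $\sm$, so necessarily $c\in\s$ and $\nu=\mu$; your strict inequality $\nu<\mu$ fails and the clean identity above is no longer available (for instance, if $x_1$ lies in the same child as $c$, then $v(x_1-c)>\mu$ and $\overline{\varphi_t(x_1)}=\infty$). The paper treats this case separately, splitting on whether $x_1,x_2$ share a proper subcluster with $c$. You can patch your argument instead by observing that in the $\s=\sm$ case part (3) is vacuous, and that the assertions in (1) and (2) are invariant under replacing $\varphi_t$ by $g\circ\varphi_t$ for $g\in\PGL_2(\Om)$; since $(a,b,c)\sim(a,b,\infty)$ regardless of whether $\infty\in S$, you may run the $c=\infty$ computation and transfer back. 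Either way, the case needs to be addressed explicitly.
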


\begin{proof}
  Let $t=(a,b,c)$, and consider first the case when $\Phi(t)$ is not the maximal
  cluster (hence $c \not \in \s$). By definition,
  \begin{equation}
    \label{eq:difference}
    \varphi_t(x_1)-\varphi_t(x_2) =
    \frac{(b-c)}{(b-a)}\frac{(x_1-x_2)(c-a)}{(x_1-c)(x_2-c)}.    
  \end{equation}
  \begin{enumerate}
  \item Let $x_1,x_2 \in \tilde{\s}$.  Recall that the valuation of
    the difference between one element in $\s$ and one element outside
    $\s$ is constant, then since $c \not \in \s$,
    $v(b-c) = v(c-a)=v(x_1-c)=v(x_2-c)$. The hypothesis that $x_1,x_2$
    lie in a proper subcluster implies that $v(x_1-x_2) > v(a-b) = \mu(\Phi(t))$
so the right hand side of (\ref{eq:difference}) is divisible by
    $\pi$ and $\varphi_t(x_1)\equiv \varphi_t(x_2) \pmod \pi$.
  \item If $x_1$ and $x_2$ do not lie in a proper subcluster, then
    $v(x_1-x_2) = v(a-b)$, hence the right hand side of
    (\ref{eq:difference}) is a unit, hence
    $\varphi_t(x_1) \not \equiv \varphi_t(x_2) \pmod{\pi}$.
  \item If $x_1 \not \in \s$,
    $\varphi_t(x_1)=\frac{(b-c)}{(b-a)} \frac{(x_1-a)}{(x_1-c)}$. The
    non-arquimedean triangle inequality implies that
    $v(x_1-c) = \min \{v(x_1-a), v(b-c)\}$ (recall that
    $v(a-c) = v(b-c)$). On the other hand, since $(a,b,c)$ is an
    ordered triple, $v(b-a)$ is the cluster's diameter. The hypothesis
    $x_1, c \not \in \s$ imply that $v(b-a) > \max\{v(b-c),v(x_1-a)\}$. Then
    $v(x_1-c)v(b-a) > v(x_1-a)v(b-c)$ and consequently
    $\overline{\varphi_t(x_1)} = \infty$.
  \end{enumerate}
  Assume on the contrary that $\Phi(t)$ is the maximal cluster, hence
  $v(b-c)=v(a-b)=v(a-c)$. Distinguish two cases depending on whether $x_1,x_2,c$
  belong to a common proper subcluster or not.  In the first case,
  $v(x_1-c) > v(x_1-a)$ for $i=1,2$ since $x_i$ lies in the same
  subcluster as $c$, so $\overline{\varphi_t(x_i)}=\infty$. In
  particular, if $x_1,x_2$ lie in a common proper subcluster,
  $\overline{\varphi_t(x_1)}= \overline{\varphi_t(x_2)}$.

  In the second case, if $x_1$ is not in the same proper subcluster as
  $c$, $v(x_1-c) \le v(x_1-a)$ hence
  $\overline{\varphi_t(x_1)} \neq \infty$. If $x_1,x_2$ are two roots
  not in the same proper subcluster as $c$,
  $v(b-c) =v(c-a)=v(b-a)=v(x_1-c)=v(x_2-c)$ and the same proof as
  before applies.
\end{proof}
\subsection{Functions on clusters} If $s \in \Cl(\R)$ lies in the image
of $\Phi$, define a function $\varphi_\s: \pts \to \PP^1$ extending
the coordinate function $\varphi_t$ to $\pts$ as follows.
  \begin{definition}
    Let $\s \in \Cl(\R)$ be in the image of $\Phi$, say $\s = \Phi(t)$
    and let $p \in \pts$ be a point, i.e. $p$ is a point in a cluster
    $\tilde{\s} \in \Cl(\R)$. In particular, $p$ is either a root (i.e. an element of $\R)$ or $p = \s'$ a parent/child of $\tilde{s}$. Define
  \[
    \overline{\varphi_\s}(p) =
    \begin{cases}
      \overline{\varphi_t}(\alpha) & \text{ if }p=\alpha \in \R\text{ and }\alpha \in \s,\\
      \overline{\varphi_t}(a) & \text{ if } \s'= \Phi((a,b,c)) \subset \s,\\
\infty & \text{ otherwise}.
\end{cases}
    \]
\end{definition}

\begin{remark}
  If $\infty \in S$, the point $\infty \in \sm$ evaluates to $\infty$ at all
  functions. This is clear for the function $\varphi_\s$ when $\s$ is
  not the maximal cluster, and for the maximal cluster it follows from
  the assumption $c=\infty$ of the ordered triple attached to it.
\end{remark}

\begin{lemma}
  Let $\s \in \Cl(\R)$ be an element in the image of $\Phi$.
  \begin{itemize}
  \item If $\s' \in \Cl(\R)$ is cluster not contained in  $\s$, the map
    $\varphi_\s$ takes the same value at all points of $\s'$.
    
  \item If $\s_1, \s_2$ are two different childs of $\s$, then
    $\varphi_\s$ takes different values at points of $\s_1$ and of
    $\s_2$.
  \end{itemize}

\label{lemma:contraction}
\end{lemma}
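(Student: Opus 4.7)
The plan is to prove both items by unfolding the definition of $\overline{\varphi_\s}$ at each type of point that can belong to a cluster, using Lemma~\ref{lemma:rootevaluation} as the main computational tool. Recall that a point of $\s'$ is either a singleton (root) child, a proper subcluster child, the parent, or (when relevant) the marked point $\infty$ or the extra intersection point from Example~\ref{ex:nomaximal}; item $(1)$ requires showing that all such points map to the single value $\infty$, while item $(2)$ requires showing that for any point of $\s_i$ the value is a specific finite quantity depending only on $\s_i$.

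For item $(1)$, the hypothesis $\s' \not\subseteq \s$ splits (by the tree property of clusters) into the two subcases $\s' \cap \s = \emptyset$ and $\s \subsetneq \s'$. I would verify in each case that every point $p$ of $\s'$ falls into the ``otherwise'' branch of the definition of $\overline{\varphi_\s}$. A singleton child $\{\alpha\}$ of $\s'$ must have $\alpha \notin \s$: in the disjoint case this is immediate, while in the case $\s \subsetneq \s'$ any $\alpha \in \s$ would force the maximal subcluster of $\s'$ containing $\alpha$ to contain $\s$, contradicting maximality of $\{\alpha\}$. A proper subcluster child $\s''$ of $\s'$ cannot lie strictly inside $\s$, since $\s'' \subsetneq \s \subsetneq \s'$ would violate maximality of $\s''$ in the second subcase while $\s' \cap \s = \emptyset$ forces $\s'' \cap \s = \emptyset$ in the first. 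The parent of $\s'$ strictly contains $\s'$, so its containment in $\s$ would force $\s' \subseteq \s$, contradicting the hypothesis. Finally the $\infty$-point and the extra intersection point (when $\sm \notin \im(\Phi)$) both evaluate to $\infty$, the first by the remark preceding the lemma, and the second by the convention that the triple attached to $\s$ has $c$ lying outside $\s$ (and in the sibling maximal cluster).

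For item $(2)$, pick roots $\alpha_i \in \s_i$ for $i=1,2$. Since $\s_1$ and $\s_2$ are distinct maximal subclusters of $\s$, the two roots share no common proper subcluster of $\s$, so Lemma~\ref{lemma:rootevaluation}$(2)$ yields $\overline{\varphi_t}(\alpha_1) \neq \overline{\varphi_t}(\alpha_2)$. I would then argue that for any point $p$ of $\s_i$ the value $\overline{\varphi_\s}(p)$ equals $\overline{\varphi_t}(\alpha_i)$: if $p$ is a root of $\s_i$ this is immediate from the definition; if $p = \s''$ is a proper subcluster child of $\s_i$ then $\s'' \subsetneq \s_i \subsetneq \s$ and Lemma~\ref{lemma:rootevaluation}$(1)$ makes the evaluation well defined and equal to $\overline{\varphi_t}(\alpha_i)$; and for the parent-direction point $(\s,\s_i)$ one appeals to the contraction morphism interpretation to read off the attachment value on the $\s$-component. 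Combining these, points of $\s_1$ all map to $\overline{\varphi_t}(\alpha_1)$ and points of $\s_2$ all map to $\overline{\varphi_t}(\alpha_2)$, and the desired inequality follows.

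The main obstacle is the careful treatment of the parent-direction point $(\s,\s_i)$, which sits at the node where the $\s_i$-component meets the $\s$-component: the piecewise definition of $\overline{\varphi_\s}$ is borderline here since $\s$ is not strictly contained in itself, and one must rely on the contraction morphism $\overline{\varphi_t}$ from \cite[Prop.~4.2]{wewers} to pin down its value as $\overline{\varphi_t}(\alpha_i)$ rather than $\infty$. A secondary subtlety is the two-maximal-cluster configuration of Example~\ref{ex:nomaximal}, where the extra intersection point has to be handled by the same convention on $c$.
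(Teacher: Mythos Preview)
Your proposal is correct and follows exactly the paper's approach: the paper's entire proof is the single sentence ``The statements follow easily from Lemma~\ref{lemma:rootevaluation},'' and you have carefully unpacked that sentence into the obvious case analysis.

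The parent-point subtlety you flag is real and worth noting. Under a literal reading of the piecewise definition of $\overline{\varphi_\s}$, the parent point $\s$ (viewed as a point of $\s_i$) lands either in the ``otherwise'' clause (value $\infty$) or, if $\subset$ is read as $\subseteq$, gets the value $\overline{\varphi_t}(a)$ for the \emph{fixed} first coordinate $a$ of the triple attached to $\s$; neither of these distinguishes $\s_1$ from $\s_2$. Your resolution via the contraction-morphism interpretation of \cite[Proposition~4.2]{wewers} is the correct geometric content. It is also worth observing that immediately after the lemma the paper identifies the parent point of $\s_i$ with the child point $\s_i$ of $\s$ (which \emph{does} evaluate to $\overline{\varphi_t}(\alpha_i)$ by the second clause), and that in the only application (the proof of Theorem~\ref{thm:geometry}) one uses the extension $\overline{\varphi_\s}(\s')$ to clusters, where this ambiguity never arises. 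So the obstacle you single out reflects an imprecision in the paper's piecewise definition rather than a gap in your argument.
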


\begin{proof}
  The statements follow easily from Lemma~\ref{lemma:rootevaluation}.
\end{proof}

Suppose that $p, q \in \pts$ satisfy one of the following hypothesis:
  \begin{itemize}
  \item $\s= \s'$ and $p=q$,
    
  \item $\s$ is a child/parent of $\s'$, $p=\s'$ and $q=\s$,

  \item $\sm$ is not in the image of $\Phi$, in which case we identify
    the extra points of the two maximal proper clusters of $\im(\Phi)$
    (see Example~\ref{ex:nomaximal}).
  \end{itemize}
  Then Lemma~\ref{lemma:contraction} implies that
  $\overline{\varphi_\s}(p) = \overline{\varphi_\s}(q)$. In
  particular, all coordinate functions do not distinguish them
  (which explains why they are identified in the model $\X$). By
  definition, the function attached to a cluster $\s = \Phi(t)$ equals
  the one attached to $t$ hence they share the same properties; for
  example $\overline{\varphi_\s}$ contracts all components different
  from $t$ to points (see \cite[Proposition 4.2]{wewers}). We extend
  the function $\overline{\varphi_\s}$ to clusters defining
\[
  \overline{\varphi_\s}(\s')=
  \begin{cases}
    \infty & \text{ if } s \subset \s',\\
    \overline{\varphi_\s}(p) & \text{ otherwise }   
  \end{cases}
  \]
  \begin{prop}
Let $X_{1}, X_{2}$ be two components of $\X$. Then they do
      not intersect if and only if there exists a component $X_t$
      whose coordinate function $\overline{\varphi_t}$ collapses $X_1$
      and $X_2$ to two different points.
\label{prop:intersection}
    \end{prop}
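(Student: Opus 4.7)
The reverse direction will be nearly immediate: if $\overline{\varphi_t}$ sends $X_1$ and $X_2$ to two distinct points, then $X_t$ cannot coincide with either of them (otherwise that component would map to the whole line $\bar{X}_t$, not a single point), and any point $q\in X_1\cap X_2$ would force $\overline{\varphi_t}(q)$ to equal both images simultaneously, contradicting distinctness. So all the content lies in the forward direction.

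For the forward direction, the plan is to translate everything through the bijection $\Phi$ of Theorem~\ref{thm:mapPhi} and, in each configuration in which $X_1\cap X_2=\emptyset$, produce an intermediate cluster $\s\in\im(\Phi)$ whose chart $\overline{\varphi_\s}$ separates $X_{\s_1}$ from $X_{\s_2}$, using Lemma~\ref{lemma:rootevaluation} together with the extension of $\overline{\varphi_\s}$ to clusters. Writing $\s_i=\Phi^{-1}(X_i)$, Theorem~\ref{thm:geometry} says that $X_1\cap X_2=\emptyset$ precisely when (i) neither $\s_i$ is a child of the other, and (ii) in the case $\sm\not\in\im(\Phi)$, they are not simultaneously the two children of $\sm$.

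I would then split into three cases. (a) If $\s_1\subsetneq\s_2$ but $\s_1$ is not a child of $\s_2$ (or symmetrically), pick a proper cluster $\s$ strictly between them; since $\s\neq\sm$ it lies in $\im(\Phi)$, and the definition gives $\overline{\varphi_\s}(\s_2)=\infty$ while Lemma~\ref{lemma:rootevaluation}(1) makes $\overline{\varphi_\s}(\s_1)$ a common finite value. (b) If $\s_1\cap\s_2=\emptyset$ and they admit a common ancestor in $\im(\Phi)$, take the smallest such $\s$; by minimality $\s_1,\s_2$ sit in two distinct children of $\s$, and Lemma~\ref{lemma:rootevaluation}(2) applied to representative roots yields $\overline{\varphi_\s}(\s_1)\neq\overline{\varphi_\s}(\s_2)$. (c) If no common ancestor lies in $\im(\Phi)$, then necessarily $\sm\not\in\im(\Phi)$ and $\s_1,\s_2$ sit in the two distinct children $\tilde{\s}_1,\tilde{\s}_2$ of $\sm$; by (ii) they cannot both equal these children, so WLOG $\s_1\subsetneq\tilde{\s}_1$, and taking $\s=\tilde{\s}_1\in\im(\Phi)$, Lemma~\ref{lemma:rootevaluation}(1) sends $X_{\s_1}$ to a finite point while Lemma~\ref{lemma:rootevaluation}(3) sends the roots of $\s_2\subseteq\tilde{\s}_2$ (which lie outside $\tilde{\s}_1$) to $\infty$.

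The main obstacle I expect will be case (c): when $\sm$ is absent from $\im(\Phi)$ the obvious smallest common ancestor is unavailable as a chart, and one has to appeal to the geometric picture of Example~\ref{ex:nomaximal} to argue that at least one of $\s_1,\s_2$ must be strictly interior to its child of $\sm$. Once that observation is in hand, the three parts of Lemma~\ref{lemma:rootevaluation} reduce the remaining verifications—in all three cases—to direct inspection of which roots lie inside the chosen cluster $\s$ and which lie outside.
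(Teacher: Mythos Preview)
Your reverse direction is fine and matches what the paper leaves implicit. The forward direction, however, has a structural problem: you invoke Theorem~\ref{thm:geometry} to characterize when $X_1\cap X_2=\emptyset$, but in the paper Theorem~\ref{thm:geometry} is proved \emph{using} Proposition~\ref{prop:intersection} (its proof appears immediately after the proposition and opens with ``By Proposition~\ref{prop:intersection}\ldots''). So within the paper's logical flow your argument is circular.

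The paper's own forward argument is much shorter and purely geometric: since the special fiber $\bar{X}$ is a connected tree of projective lines (as recalled in Section~\ref{section:components}), two non-intersecting components $X_1,X_2$ are joined by a unique path containing some intermediate component $X_t\neq X_1,X_2$. The contraction $\overline{\varphi_t}$ then collapses $X_1$ and $X_2$, which lie in distinct connected components of $\bar{X}\setminus X_t$, to two distinct points. No cluster bookkeeping or case split is needed at this stage.

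In fact your case analysis (a)--(c) is essentially the content of the paper's subsequent proof of Theorem~\ref{thm:geometry}, with the roles of the two results swapped. If you wanted to salvage your route you would need an independent proof of Theorem~\ref{thm:geometry} (the paper notes one is implicit in \cite{Tim}), but then Proposition~\ref{prop:intersection} would no longer be doing any work in the paper.
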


\begin{proof}
  If $X_1$ and $X_2$ do not intersect, then since $\X$ is connected,
  there exists a component $X_t$ in the path connecting them which is
  not equal to $X_1$ nor $X_2$. Then $\overline{\varphi_t}$ collapses
  $X_1$ to one point in $\PP^1(\overline{\FF_p})$ and $X_2$ to a
  different one.
\end{proof}
We can now give a proof of how the components intersect.
\begin{proof}[Proof of Theorem \ref{thm:geometry}]
  By Proposition~\ref{prop:intersection}, the clusters $\s_i=\Phi(t_i)$
  do not intersect if and only if there exists $t_3$ such that
  $\overline{\varphi_{t_3}}$ takes different values at $\s_1$ and
  $\s_2$. Distinguish the following cases:
  \begin{itemize}
    
  \item Suppose that $\s_1 \cap \s_2 = \emptyset$ and they are maximal
    subclusters. In such case, $\sm$ is not in the image of $\Phi$. In
    particular, if $s_3$ is any other subcluster, $\s_3 \subset \s_1$ or
    $\s_3 \subset \s_2$ by Theorem~\ref{thm:mapPhi}. Then by
    Proposition~\ref{prop:intersection} $(4)$,
    $\overline{\varphi_{\s_3}}(\s_i)=\infty$ hence by
    Proposition~\ref{prop:intersection} $\s_1$ and $\s_2$ do intersect.
  \item Suppose that $\s_1 \cap \s_2 = \emptyset$ and $\s_1, \s_2$ are
    maximal subclusters of $\tilde{s}$,
    $\overline{\varphi_{\tilde{\s}}}$ takes different values at $\s_1$
    and $\s_2$, hence they do not intersect. Otherwise, let $\tilde{\s}$
    be the minimal cluster containing both of them,
    and $\tilde{\s_i}$, $i=1,2$, be a maximal subcluster of $\tilde{\s}$
    containing $\s_i$. Then the coordinate function
    $\overline{\varphi_{\tilde{\s}}}$ takes different values at $\s_1$ and $\s_2$
    by Lemma~\ref{lemma:contraction}.
    
  \item Suppose that $\s_1$ is a subcluster of $\s_2$. If it is not
    maximal, there exists $\s_3$ subcluster such that
    $\s_1 \subsetneq \s_3 \subsetneq \s_2$. Then
    $\overline{\varphi_{\s_3}}$ takes the value $\infty$ at $\s_2$ and
    sends $\s_1$ to an element in $\overline{\FF_p}$, hence they do not
    intersect.
    
  \item Suppose $\s_1$ is a maximal subcluster of $\s_2$. If they do not
    intersect, then there exists $\s_3$ cluster such that
    $\overline{\varphi_{\s_3}}$ takes different values at $\s_1$ and
    $\s_2$. Then Lemma~\ref{lemma:contraction} gives the following implications
    \begin{itemize}
    \item If $\s_1 \subset \s_2\subset \s_3$, $\overline{\varphi_{\s_3}}(\s_1) = \overline{\varphi_{\s_3}}(\s_2)$.
  \item If $\s_3 \subset \s_1 \subset \s_2$,
    $\overline{\varphi_{\s_3}}(\s_1) = \overline{\varphi_{\s_3}}(\s_2) =
    \infty$.
    
  \item If $\s_3 \cap \s_2 = \emptyset$, 
    $\overline{\varphi_{\s_3}}(\s_1) = \overline{\varphi_{\s_3}}(\s_2)$.
    \end{itemize}
    Then $\s_1$ and $\s_2$ must intersect.
  \end{itemize}
\end{proof}

\begin{ex1}[Continued] Let us go back to Example~\ref{ex:1}.
  The set of proper cluster equals:
\begin{align*}
  \s_1&=\{0, p^2, p, p+p^2,2p,2p+p^2 \},& \s_2&=\{0, p^2 \}, &\s_3&= \{p,p+p^2\},\\
  \s_4&= \{2p,2p+p^2\}, & \s_5&= \{1, 1+p, 1+2p \},& \sm&= \R.
\end{align*}
Keeping the notation of Example~\ref{ex:1}, the map $\Phi$ maps the
triples $t_i$ to $\s_i$ for $i=1,\dots, 5$ and $t_0$ to $\sm$.
The cluster picture is given in
Figure~\ref{fig:cluster1}, where the roots follow the same order as
the one given for $\R$.

\begin{figure}[H]
\centering
\begin{minipage}{.5\textwidth}
\centering
\scalebox{1.4}
  {\clusterpicture
 \Root {1} {first} {r1};
  \Root {} {r1} {r2};
  \Root {3} {r2} {r3};
  \Root {} {r3} {r4};
  \Root {3} {r4} {r5};
  \Root {} {r5} {r6};
  \Root {5} {r6} {r7};
  \Root {} {r7} {r8};
  \Root {} {r8} {r9}
   \ClusterLDName c1[][][\s_2] = (r1)(r2);
  \ClusterLDName c2[][][\s_3] = (r3)(r4);
   \ClusterLDName c3[][][\s_4] = (r5)(r6);
   \ClusterLDName c4[][][\s_1] = (c1)(c2)(c3)(c1n)(c2n)(c3n);
  \ClusterLDName c5[][][\s_5] = (r7)(r8)(r9);
  \ClusterLDName c6[][][\sm] = (c4)(c5)(c4n)(c5n);
 \endclusterpicture}
 \caption{Cluster of $\C$.}
\label{fig:cluster1}
\end{minipage}%
\begin{minipage}{.5\textwidth}
\centering
\scalebox{1}
{\begin{tikzpicture}[xscale=1.2,yscale=1.2,
  l1/.style={shorten >=-1.3em,shorten <=-0.5em,thick}]
  \draw[l1] (0.6,0.00)--(3.60,0.00) node[scale=0.8,above left=-0.17em] {} node[scale=0.5,blue,below right=-0.5pt] {$X_m$};
\filldraw[black] (2.35,0) circle (2pt) node[scale=0.5,red,yshift=-1em] {$\infty$};
\draw[l1] (1.6,0.00)--node[right=-3pt] {} (1.6,2.0) node[xshift=.2cm, yshift=.2cm,font=\tiny, scale=0.8, blue] {$X_1$};
\draw[l1] (3.1,0.00)--node[right=-3pt] {} (3.1,2.0) node[xshift=-.2cm, yshift=.2cm,font=\tiny, scale=0.8, blue] {$X_5$};

\filldraw[black] (3.1,0.6) circle (2pt) node[scale=0.5,red,xshift=1cm] {$1+2p$};
\filldraw[black] (3.1,1.2) circle (2pt) node[scale=0.5,red,xshift=1cm] {$1+p$};
\filldraw[black] (3.1,1.8) circle (2pt) node[scale=0.5,red,xshift=1cm] {$1$};

\draw[l1] (0,0.6)--node[right=-3pt] {} (1.8,0.6) node[scale=0.8,blue,below right=-0.5pt,font=\tiny] {$X_4$};

\filldraw[black] (0.2,0.6) circle (2pt) node[scale=0.5,red,yshift=-1em] {$2p$};
\filldraw[black] (1.1,0.6) circle (2pt) node[scale=0.5,red,below] {$2p+p^2$};

\draw[l1] (0,1.2)--node[right=-3pt] {} (1.8,1.2) node[scale=0.8,blue,below right=-0.5pt,font=\tiny] {$X_3$};

\filldraw[black] (0.2,1.2) circle (2pt) node[scale=0.5,red,yshift=-1.0em] {$p$};
\filldraw[black] (1.1,1.2) circle (2pt) node[scale=0.5,red,below] {$p+p^2$};

\draw[l1] (0,1.8)--node[right=-3pt] {} (1.8,1.8) node[scale=0.8,blue,below right=-0.5pt,font=\tiny] {$X_2$};

\filldraw[black] (0.2,1.8) circle (2pt) node[scale=0.5,red,yshift=-1em] {$0$};
\filldraw[black] (1.1,1.8) circle (2pt) node[scale=0.5,red,below] {$p^2$};
\end{tikzpicture}}
\end{minipage}
\end{figure}


\end{ex1}

\section{The semistable model}
\label{section:ssmodel}
Recall that $\C$ is a cyclic cover of $\PP^1$, coming from the natural
map $\phi:\C \to \AA^1$ sending $(x,y)$ to $x$. Let $K_2=K(\pi^{1/d})$
and $\Y$ be the normalization of $\X$ in the function field of
$Y_{K_2}$. The hypothesis $\R \subset K$ and $\pi^{1/d} \in K_2$ implies
that $\Y$ is a semistable model of $Y$ (by \cite[Corollary
3.6]{wewers}).  The special fiber $\bar{Y}$ of $\Y$ is obtained as
follows: let $t \in T$ with $\Phi(t) = \s =D(r,d)$ (we can assume
$r \in \R$). The cluster $\s$ corresponds to a component of the
special fiber of $\bar{X}$. Let $x_t :=\phi_t^*(x)$ be the pullback of
the standard coordinate $x$ of $X$. The two variables are related via
$x = \pi^dx_t+r$. Consider the polynomial $f(x_t)$ and let $e_t$ be
its content valuation (in Section \ref{section:weightedcluster} we
will explain how to compute such value from a \emph{weighted
  cluster}). Let $f_t(x_t) = f(x_t)\pi^{-e_t}$ and define the curve:
\begin{equation}
  \label{eq:fibers}
\overline{\YY}_t: y_t^n = \overline{f_t}(x_t).
\end{equation}
The curve $\overline{Y_t}$ is defined as the normalization of
$\overline{\YY}_t$. Note that the curve $\overline{\YY_t}$ might be
reducible, and even its components might not be defined over $K$
(but they are over an unramified extension of degree at most $n$).
Explicitly, let
$\tilde{\s}_1, \ldots, \tilde{\s}_N$ be the children of $\s$. Let
$\alpha_i \in \tilde{\s_i}$ be any root and let $a_i = |\tilde{\s}_i|$. Each
cluster $\tilde{\s}_i$ correspond to a factor of $\overline{f_t}$ and
the number $a_i$ to the multiplicity of the root $\alpha_i$. Let also
$c_t = \prod_{\beta \in\R \setminus \{\s\}}\frac{(r-\beta)}{\|(r-\beta)\|_p}$.

\begin{prop}
\label{prop:numbercomponents}
Let $d:=gcd(n,a_1, \ldots, a_N)$ and keep the previous notation.  Then
the curve $\overline{\YY_t}$ has $d$ irreducible components defined
over the extension $K(c_t^{1/d})$. In particular the same holds for
$\overline{Y_t}$.
\end{prop}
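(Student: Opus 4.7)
The plan is to compute the reduced polynomial $\overline{f_t}(x_t)$ explicitly from the cluster data and then decompose the affine curve $y_t^n = \overline{f_t}(x_t)$ into geometrically irreducible components via a classical irreducibility criterion for superelliptic equations.

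Write $\delta = \mu(\s)$. Under the substitution $x = \pi^{\delta} x_t + r$, each factor $x-\beta$ of $f$ behaves differently depending on whether $v(r-\beta)$ is less than, equal to, or greater than $\delta$, i.e.\ whether $\beta$ lies outside $\s$, in a child $\tilde{\s}_i$ different from the one containing $r$, or in the child containing $r$. In each case the dominant $\pi$-adic term of $\pi^{\delta} x_t + r - \beta$ is easy to read off. Collecting contributions and dividing out the content $\pi^{e_t}$ should yield
\[
\overline{f_t}(x_t) = \bar{c}_t \prod_{i=1}^{N} (x_t - \bar{\alpha}_i)^{a_i},
\]
where $\bar{c}_t$ is the reduction of $c_t$ and the $\bar{\alpha}_i$ are the pairwise distinct residues $\overline{\varphi_t}(\alpha_i)$, their distinctness being guaranteed by Lemma~\ref{lemma:rootevaluation}.

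Once this reduction is in hand, I would analyse the curve $y_t^n = \bar{c}_t \prod (x_t - \bar{\alpha}_i)^{a_i}$. With $d = \gcd(n, a_1, \ldots, a_N)$, write $n = dn'$ and $a_i = db_i$, so $\gcd(n', b_1, \ldots, b_N) = 1$, and set $g(x_t) = \prod (x_t - \bar{\alpha}_i)^{b_i}$. The equation becomes $(y_t^{n'})^d = \bar{c}_t\, g(x_t)^d$; since $p \nmid n$ and $\zeta_n \in K$, the $d$-th roots of unity lie already in $k$, and over any extension $\kappa$ of $k$ containing $\gamma$ with $\gamma^d = \bar{c}_t$ the equation factors as
\[
\prod_{\zeta^d=1} \bigl( y_t^{n'} - \zeta \gamma\, g(x_t) \bigr) = 0,
\]
producing $d$ candidate components.

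The main obstacle is verifying that each factor $y_t^{n'} - \zeta \gamma\, g(x_t)$ is absolutely irreducible in $\kappa(x_t)[y_t]$. This reduces to the classical (Capelli-type) criterion: over a field containing the $m$-th roots of unity, $y^m - h(x)$ is irreducible precisely when $h$ is not an $\ell$-th power in the rational function field for any prime $\ell \mid m$; the condition $\gcd(n', b_1, \ldots, b_N) = 1$ is exactly what is needed, and the $d$ factors are pairwise distinct because the $\zeta \gamma$ are. For the field of definition, $c_t \in \Om^{*}$ is a unit lifting $\bar{c}_t$, and since $\zeta_d \in K$, Hensel's lemma produces an element $c_t^{1/d}$ in the unramified extension $K(c_t^{1/d})$, over which all components are defined. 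The statement for the normalisation $\overline{Y_t}$ is then immediate, as normalisation preserves both the number of irreducible components and their field of definition.
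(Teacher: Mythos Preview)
Your argument is correct and follows the paper's proof in overall shape: both compute the reduced equation $\overline{f_t}(x_t)=\bar c_t\prod_i(x_t-\bar\alpha_i)^{a_i}$ from the cluster data, factor $y_t^n-\overline{f_t}$ into $d$ pieces over the extension generated by a $d$-th root of $c_t$, and then check that each piece is irreducible. The one genuine difference is the irreducibility step. The paper argues globally: since $\mu_n\subset k$, the cover $y_t^n=\overline{f_t}(x_t)$ of the line is Galois, so all geometric components share the same ramification over each $\bar\alpha_i$; hence the number of components divides every $a_i$ as well as $n$, therefore divides $d$, and combined with the explicit factorisation into $d$ pieces this forces exactly $d$ irreducible components. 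You instead apply the Capelli criterion directly to each factor $y_t^{n'}-\zeta\gamma\,g(x_t)$, using $\gcd(n',b_1,\ldots,b_N)=1$ to produce, for every prime $\ell\mid n'$, a zero of $g$ whose order is prime to $\ell$. Both routes are short and standard; yours is perhaps more self-contained, while the paper's phrases the same content in the language of Galois covers and ramification.
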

\begin{proof} If $\beta \in \R$ is a root not contained in $\s$ then
  the term $x-\beta = \pi^dx_t+r - \beta$ reduces to
  $\frac{(r-\beta)}{\|r-\beta\|_p}$ up to a power of $\pi$ (which can
  be removed from the equation by the assumption $\pi^{1/d} \in
  K_2$). Then the reduction of the polynomial $f_t(x_t)$ equals
  $\overline{f_t}(x_t)=c_t \prod_{i=1}^N(x_t-\alpha_i)^{a_i}$. For $\ell = 0,\ldots,d-1$ let
  \begin{equation}
    \label{eq:components}
\overline{\YY_t^{(\ell)}}:y_t^{n/d}=\zeta_d^\ell c_t^{1/d}\prod_{i=1}^N (x_t-\alpha_i)^{a_i/d},    
  \end{equation}
  where $\zeta_d$ denotes a $d$-th root of unity. Then clearly
  $\overline{\YY}_t = \prod_{\ell=0}^{d-1} \overline{\YY_t^{(\ell)}}$. Each
  curve $\overline{\YY_t^{(\ell)}}$ is irreducible, because the cover
  $K[x_t,y_t]/(y_t^n-f_t(x_t))$ of $K[x]$ is Galois, hence the
  ramification degree is the same on all its components. In
  particular, the number of components divide $a_i$ for all $i$.
 \end{proof}
 To understand the semistable model $\Y$ we are led to describe how
 different components intersect.  If $P$ is a point in
 $\overline{X}_t$, then number of points of $\varphi_t^{-1}(P)$ in
 $\overline{Y_t}$ equals $r={\gcd(n,v(f_t))}$ each of them with
 ramification degree $\frac{n}{\gcd(n,v(f_t))}$. In particular, each
 component gets $\frac{\gcd(n,v(f_t))}{d}$ different points.

 If $P \in \R$ (to easy notation suppose that $P=\alpha_1=0$, which
 can always be done after a translation) then the normalization
 of (\ref{eq:fibers}) in an open set around $0$ is given by the equations
 \[
   \left\{
\begin{aligned}   
    z_t^r &= c_t\prod_{i=2}^N(x_t-\alpha_i)^{a_i}\\
    z_tx_t^{a_1/r}&=y_t^{n/r}.
  \end{aligned}
  \right.
\]
In particular, the set of $r$ points in $\varphi_t^{-1}(0)$, with
coordinates $(x_t,y_t,z_t)$ is given by
\[
  \left\{Q_i=\left(0,0,\zeta_r^i\left(c_t\prod_{i=2}^N(-\alpha_i)^{a_i}\right)^{1/r}\right)\; : \; 0 \le
  i \le r-1\right\}.
  \]

Recall that $d = \gcd(n,a_1,\ldots,a_N)$, in particular
$d \mid r = \gcd(n,a_1)$. From the decomposition
(\ref{eq:components}), and choosing the roots of unity in a consistent
way (i.e. such that they satisfy $\zeta_{nm}^n = \zeta_m$) it follows that
$Q_i \in \overline{Y_t^{(\ell)}}$ precisely when
$i \equiv \ell \pmod d$. In particular, $Q_0$ belongs to the zeroth
curve, $Q_1$ to the first one, and so on.

\medskip

Let $\tilde{\s}$ be a child of $\s$ (corresponding to
$\tilde{t} \in T$); say $\tilde{\s}=\tilde{\s}_1$ in the above
notation and the center is again $0$. Then the curve
$\overline{\YY_{\tilde{t}}}$ has an equation as in (\ref{eq:fibers}),
more concretely, let $\tilde{c} = c_t\prod_{i=2}^N(-\alpha_i)^{a_i}$, then
\begin{equation}
  \label{eq:yychild}
  \overline{\YY_{\tilde{t}}}: y_{\tilde{t}}^n = \tilde{c}
  \prod_{i=1}^{\tilde{N}}(x_{\tilde{t}}-\beta_i)^{b_i},
\end{equation}
where the product is over childs of $\tilde{\s}$, the numbers $b_i$
equal the number of roots in each child, and $\beta_i$ is a root in
each of them. Note that $\sum_{i=1}^{\tilde{N}}b_i = a_1$. The gluing
(as described in \cite{Tim} before Remark 3.9) corresponds in our
coordinates to identify the infinity point in the chart $\tilde{t}$
with the zero point in the chart $t$. Then equation~(\ref{eq:yychild})
can be written as
\begin{equation}
  \label{eq:curve2}
  \left(\frac{y_{\tilde{t}}^{n/r}}{x_{\tilde{t}}^{a_1/r}}\right)^r = \tilde{c} \prod_{i=1}^{\tilde{N}}\left(1-\frac{\beta_i}{x_{\tilde{t}}}\right)^{b_i}.
\end{equation}
  This equation is the key to identify the points $Q_i$ in $\s$ with
  $r$-points in (\ref{eq:curve2}) (or its components if it happens to be
  reducible) and gives the intersection points of $\s$ and
  $\tilde{\s}$ (see the formulas in \cite[Proposition 5.5]{Tim}).

  Let $\tilde{d} = \gcd(n,b_1,\ldots,b_{\tilde{N}})$, then the curve
  $\overline{\YY_{\tilde{t}}}$ consists on $\tilde{d}$ components
  (ordered according to powers of $\tilde{d}$-th roots of unity) and
  with the compatible choice of roots of unity, the point $Q_0$ lies
  at the infinity part of the zeroth component, and so on. Let us
  illustrate the situation with some examples.
  
  \begin{ex1}[Continued II] Recall that there are six components (see
    Figures~\ref{fig:example1} and \ref{fig:cluster1}):
\begin{itemize}
\item $\overline{\mathcal{Y}}_m: y_m^6 = x_m^6 (x_m - 1)^3$ \qquad (with relations $x_m = x$, ),
\item $ \overline{\mathcal{Y}}_1: y_1^6 = (-1)x_1^2 (x_1 - 1)^2(x_1 - 2)^2$ \qquad (with relations $x_1 = x/p$, $y_1 = y/p$),
\item $\overline{\mathcal{Y}}_2: y_2^6 = (-4)x_2 (x_2 - 1)$ \qquad (with relations $x_2 = x/p^2$, $y_2 = y/p^{4/3}$),
\item $\overline{\mathcal{Y}}_3: y_3^6 = (-4)x_3 (x_3 - 1)$ \qquad (with relations $x_3 = (x-p)/p^2$, $y_3 = y/p^{4/3}$),
\item $\overline{\mathcal{Y}}_4: y_4^6 = (-4)x_4 (x_4 - 1)$ \qquad (with relations $x_4 = (x-2p)/p^2$, $y_4 = y/p^{4/3}$),
\item $\overline{\mathcal{Y}}_5: y_5^6 = x_5 (x_5 - 1)(x_5 -2)$ \qquad (with relations $x_5 = (x-1)/p$, $y_5 = y/p^{1/2}$).
\end{itemize}
The curves
$\overline{\mathcal{Y}}_2, \overline{\mathcal{Y}}_3,
\overline{\mathcal{Y}}_4, $ are non-singular irreducible curves of
genus $2$, while $\overline{\mathcal{Y}}_5$ is a non-singular curve of
genus $4$. On the other hand, the curves $\overline{\mathcal{Y}}_m$
and $\overline{\mathcal{Y}}_1$ are reducible. The curve
$\overline{\mathcal{Y}}_m$ consists of the union of three (genus $0$)
curves $\overline{\mathcal{Y}_m^{(\ell)}}$, $\ell=0,1,2$ with
equations
\[
  \overline{\mathcal{Y}_m^{(\ell)}}: y_m^2 = \zeta_3^\ell x_m^2(x_m -1) \; (y_m = y),
  \]
  where $\zeta_3$ is a third root of unity in $\FF_p$. The curve
  $\overline{\mathcal{Y}}_1$ consists of the union of two genus $1$
  curves $\overline{\mathcal{Y}_1^{(\ell)}}$, $\ell=0,1$ with equation
\[
  \overline{\mathcal{Y}_1^{(\ell)}}: y_1^3 =(-1)^\ell \sqrt{-1} x_1(x_1 -1)(x_1 - 2) \; (y_1 = y/p).
\]
Note that the components of $\overline{\mathcal{Y}_1}$ need
not be defined over $K_2$, but at most over an unramified extension
(since $p\nmid 6$, $K_2(\sqrt{-1})/K_2$ is unramified).  The
normalization explained in the previous section, in an open
neighborhood of $0$ (but not of $1$) of the curve
$\overline{\mathcal{Y}_m^{(\ell)}}$ has equation
\[
\begin{cases} z_m^2 = \zeta_3^\ell(x_m -1) \\
  z_m x_m = y_m. \end{cases}
\]
  The preimage of $0$ in the $\ell$-th component corresponds to the
  points $P_\ell^{\pm}=\left(0,0, \pm \sqrt{-1}\zeta_3^{2\ell}\right)$. In
  particular, it intersects $\overline{\mathcal{Y}}_1$ in $2$ points.
  The component graph of the special fiber of $\Y$ is given in
  Figure~\ref{fig:componentgraph}.

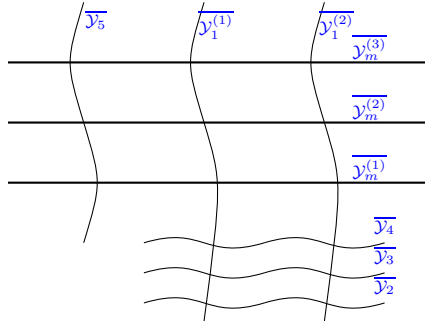
\begin{figure}[h]
\begin{tikzpicture}[xscale=.8,yscale=.8,
  l1/.style={shorten >=-1.3em,shorten <=-0.5em,thick}]
  \draw[l1] (0,1)--node[right=-3pt] {} (6,1) node[xshift=-.2cm, yshift=.2cm,font=\tiny, scale=0.8, blue] {$\overline{\YY_m^{(1)}}$};
  \draw[l1] (0,2)--node[right=-3pt] {} (6,2) node[xshift=-.2cm, yshift=.2cm,font=\tiny, scale=0.8, blue] {$\overline{\YY_m^{(2)}}$};
  \draw[l1] (0,3)--node[right=-3pt] {} (6,3) node[xshift=-.2cm, yshift=.2cm,font=\tiny, scale=0.8, blue] {$\overline{\YY_m^{(3)}}$};
  \draw (1,0) .. controls (1.3,1) and (1.3,1).. (1,2) ;
  \draw (1,2) .. controls (0.7,3) and (0.7,3) .. (1,4);

  \node at (1.2,4) [font=\tiny, scale=0.8, blue,below] {$\overline{\mathcal{Y}_5}$};

  \draw (3,-1.3) .. controls (3.3,1) and (3.3,1).. (3,2) ;
  \draw (3,2) .. controls (2.7,3) and (2.7,3) .. (3,4);

  \node at (3.2,4) [font=\tiny, scale=0.8, blue,below] {$\overline{\mathcal{Y}_1^{(1)}}$};

  \draw (5,-1.3) .. controls (5.3,1) and (5.3,1).. (5,2) ;
  \draw (5,2) .. controls (4.7,3) and (4.7,3) .. (5,4) ;

  \node at (5.2,4) [font=\tiny, scale=0.8, blue,below] {$\overline{\mathcal{Y}_1^{(2)}}$};

  \draw (2,0) .. controls (3,0.3) and (3,-0.3).. (4,0) ;
  \draw (4,0) .. controls (5,0.3) and (5,-0.3) .. (6,0);

  \node at (6,0) [font=\tiny, scale=0.8, blue,above] {$\overline{\YY_4}$};
  
  \draw (2,-.5) .. controls (3,-0.2) and (3,-0.8).. (4,-0.5) ;
  \draw (4,-0.5) .. controls (5,-0.2) and (5,-0.8) .. (6,-0.5);

  \node at (6,-0.5) [font=\tiny, scale=0.8, blue,above] {$\overline{\YY_3}$};
  
  \draw (2,-1) .. controls (3,-0.7) and (3,-1.3).. (4,-1) ;
  \draw (4,-1) .. controls (5,-0.7) and (5,-1.3) .. (6,-1);

\node at (6,-1) [font=\tiny, scale=0.8, blue,above] {$\overline{\YY_2}$};
\end{tikzpicture}
\caption{Special Fiber of $\Y$}
\label{fig:componentgraph}
\end{figure}
\end{ex1}

\begin{ex3}
  \label{example:2}
  Let $p$ be an odd prime greater than $3$ and $\C/\ZZ_p$ be the curve with equation
\begin{multline*}
y^6=x(x-p^2)(x-p)(x-p-p^2)(x-2p)(x-2p-p^2)(x-1)(x-1-p^2)(x-1-2p^2)\\(x-1-p)(x-1-p-p^2)(x-1-p-2p^2)(x-2)(x-2-p)(x-2-2p).
\end{multline*}
It is a curve of genus $34$.  The set of roots of $f(x)$ equals
$\R=\{0,p,p^2,p+p^2,2p,2p+p^2,1,1+p,1+p^2,1+2p^2,1+p+p^2,1+p+2p^2,2,2+p,2+2p\}$. There
are nine clusters as shown in Figure~\ref{fig:cluster5}. They give the components:
\begin{figure}[h]
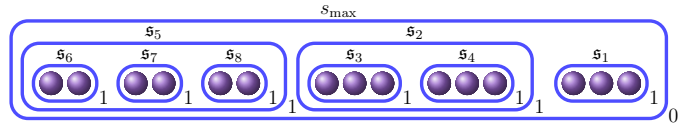

$$
\scalebox{1.6}{\clusterpicture
 \Root {1} {first} {r1};
  \Root {} {r1} {r2};
  \Root {3} {r2} {r3};
  \Root {} {r3} {r4};
  \Root {3} {r4} {r5};
  \Root {} {r5} {r6};
  \Root {5} {r6} {r7};
  \Root {} {r7} {r8};
  \Root {} {r8} {r9};
  \Root {3} {r9} {r10};
  \Root {} {r10} {r11};
  \Root {} {r11} {r12};
  \Root {5} {r12} {r13};
  \Root {} {r13} {r14};  
  \Root {} {r14} {r15}
   \ClusterLDName c1[][1][\s_6] = (r1)(r2);
  \ClusterLDName c2[][1][\s_7] = (r3)(r4);
   \ClusterLDName c3[][1][\s_8] = (r5)(r6);
   \ClusterLDName c4[][1][\s_5] = (c1)(c2)(c3)(c1n)(c2n)(c3n);
  \ClusterLDName c5[][1][\s_3] = (r7)(r8)(r9);
   \ClusterLDName c6[][1][\s_4] = (r10)(r11)(r12);
    \ClusterLDName c7[][1][\s_2] = (c5)(c6)(c5n)(c6n);
     \ClusterLDName c8[][1][\s_1] = (r13)(r14)(r15);
  \ClusterLDName c9[][0][\sm] = (c4)(c7)(c8)(c4n)(c7n)(c8n);
\endclusterpicture}
$$
\caption{Cluster picture}
\label{fig:cluster5}
\end{figure}

\begin{itemize}
\item $\sm$ is the disc with center $r_m=0$ and diameter $\mu=0$. It
  corresponds to a component
  $\overline{\YY_m}:y_m^6 =x_m^6(x_m-1)^6(x_m-2)^3$ consisting of $3$
  irreducible components
  $\overline{\YY_m^{(i)}}:y_m^2=\zeta_3^ix_m^2(x_m-1)^2(x_m-2)$,
  $0 \le i \le 2$ of genus $0$ (see Proposition~\ref{prop:genus}).
\item $\s_1=\{2,2+p,2+2p\}=D(2,1)$, with variable $x=px_1+2$,
  $y=p^{1/2}y_1$ and equation
  $\overline{\YY_1}:y_1^6=2^6x_1(x_1-1)(x_1-2)$. It is an irreducible
  curve of genus $4$.
\item $\s_2=\{1,1+p,1+p^2,1+2p^2,1+p+p^2,1+p+2p^2\} = D(1,1)$, with
  variable $x=px_2+1$, $y=py_2$ and equation
  $\overline{\YY_2}:y_2^6=-x_2^3(x_2-1)^3$. It consists of three
  irreducible components
  $\overline{\YY_2^{(i)}}:y_1^2=-\zeta_3^ix_1(x_1-1)$, $0 \le i \le 2$
  of genus $0$.
\item $\s_3=\{1,1+p^2,1+2p^2\} = D(1,2)$, with variable $x=p^2x_3+1$,
  $y = p^{3/2}y_3$ and equation
  $\overline{\YY_3}:y_3^6=x_3(x_3-1)(x_3-2)$. It is an irreducible
  curve of genus $4$.
\item $\s_4=\{1+p,1+p+p^2,1+p+2p^2\} = D(1+p,2)$, with variable
  $x=p^2x_4+1+p$, $y = p^{3/2}y_4$ and equation
  $\overline{\YY_4}:y_4^6=-x_4(x_4-1)(x_4-2)$. It is an irreducible
  curve of genus $4$.
\item $\s_5=\{0,p,p^2,p+p^2,2p,2p+2p^2\} = D(0,1)$, with variable
  $x=px_5$, $y = py_5$ and equation
  $\overline{\YY_5}:y_5^6=-8x_5^2(x_5-1)^2(x_5-2)^2$. It consists of
  two irreducible components
  $\overline{\YY_5^{(i)}}:y_5^3=(-1)^i2\sqrt{-2}x_5(x_5-1)(x_5-2)$,
  $i=0, 1$ of genus $1$.
\item $\s_6=\{0,p^2\} = D(0,2)$, with variable $x=p^2x_6$,
  $y = p^{4/3}y_6$ and equation
  $\overline{\YY_6}:y_6^6=-32x_6(x_6-1)$. It is an irreducible curve of genus $2$.
\item $\s_7=\{p,p+p^2\} = D(p,2)$, with variable $x=p^2x_7+p$,
  $y = p^{4/3}y_7$ and equation
  $\overline{\YY_7}:y_7^6=-8x_7(x_7-1)$. It is an irreducible curve of genus
  $2$.
\item $\s_8=\{2p,2p+p^2\} = D(2p,2)$, with variable $x=p^2x_8+2p$,
  $y = p^{4/3}y_8$ and equation
  $\overline{\YY_8}:y_8^6=-32x_8(x_8-1)$. It is an irreducible curve of genus
  $2$.
\end{itemize}
The special fiber of $\X$ and $\Y$ are given in Figure~\ref{pic:Yt} and Figure~\ref{pic:Yspecialfiber} respectively.
\begin{figure}[h]
\centering
\begin{minipage}{.5\textwidth}
  \centering
\begin{tikzpicture}[xscale=1.2,yscale=1.2,
  l1/.style={shorten >=-1.3em,shorten <=-0.5em,thick}]
  \draw[l1] (0.6,0.00)--(3.60,0.00) node[scale=0.8,above left=-0.17em] {} node[scale=0.5,blue,below right=-0.5pt] {$X_m$};
\filldraw[black] (1,0) circle (2pt) node[scale=0.5,red,yshift=-1em] {$\infty$};
\draw[l1] (1.6,0.00)--node[right=-3pt] {} (1.6,2.0) node[xshift=.2cm, yshift=.2cm,font=\tiny, scale=0.8, blue] {$X_5$};
\draw[l1] (3.1,0.00)--node[right=-3pt] {} (3.1,2.0) node[xshift=-.2cm, yshift=.2cm,font=\tiny, scale=0.8, blue] {$X_2$};

\draw[l1] (2.35,0.00)--node[right=-3pt] {} (2.35,2.0) node[xshift=-.2cm, yshift=.2cm,font=\tiny, scale=0.8, blue] {$X_1$};

\filldraw[black] (2.35,0.4) circle (2pt) node[scale=0.5,red,xshift=1cm] {$1+2p$};
\filldraw[black] (2.35,2) circle (2pt) node[scale=0.5,red,xshift=1cm] {$1+p$};
\filldraw[black] (2.35,1.4) circle (2pt) node[scale=0.5,red,xshift=.5cm] {$1$};

\draw[l1] (0,0.6)--node[right=-3pt] {} (1.8,0.6) node[scale=0.8,blue,below right=-0.5pt,font=\tiny] {$X_8$};

\filldraw[black] (0.2,0.6) circle (2pt) node[scale=0.5,red,yshift=-1em] {$2p$};
\filldraw[black] (1.1,0.6) circle (2pt) node[scale=0.5,red,below] {$2p+p^2$};

\draw[l1] (0,1.2)--node[right=-3pt] {} (1.8,1.2) node[scale=0.8,blue,below right=-0.5pt,font=\tiny] {$X_7$};

\filldraw[black] (0.2,1.2) circle (2pt) node[scale=0.5,red,yshift=-1.0em] {$p$};
\filldraw[black] (1.1,1.2) circle (2pt) node[scale=0.5,red,below] {$p+p^2$};

\draw[l1] (0,1.8)--node[right=-3pt] {} (1.8,1.8) node[scale=0.8,blue,below right=-0.5pt,font=\tiny] {$X_6$};

\filldraw[black] (0.2,1.8) circle (2pt) node[scale=0.5,red,yshift=-1em] {$0$};
\filldraw[black] (1.1,1.8) circle (2pt) node[scale=0.5,red,below] {$p^2$};

  \draw[l1] (2.8,1.5)--(4.60,1.5) node[scale=0.8,above left=-0.17em] {} node[scale=0.5,blue,xshift=-4cm, yshift=-1em] {$X_3$};
\filldraw[black] (3.4,1.5) circle (2pt) node[scale=0.5,red,yshift=-1em] {$1$};
\filldraw[black] (4.1,1.5) circle (2pt) node[scale=0.5,red,yshift=-1em] {$1+p^2$};
\filldraw[black] (4.9,1.5) circle (2pt) node[scale=0.5,red,yshift=-1em] {$1+2p^2$};

  \draw[l1] (2.8,0.9)--(4.60,0.9) node[scale=0.8,above left=-0.17em] {} node[scale=0.5,blue,xshift=-4cm, yshift=-1em] {$X_4$};
\filldraw[black] (3.4,0.9) circle (2pt) node[scale=0.5,red,yshift=-1em] {$1+p$};
\filldraw[black] (4.1,0.9) circle (2pt) node[scale=0.5,red,yshift=-1em] {$1+p+p^2$};
\filldraw[black] (4.9,0.9) circle (2pt) node[scale=0.5,red,yshift=-1em] {$1+p+2p^2$};
\end{tikzpicture}
\caption{Special Fiber of $\X$}
\label{pic:Yt}
\end{minipage}%
\begin{minipage}{.5\textwidth}
\centering
\begin{tikzpicture}[xscale=.8,yscale=.8,
  l1/.style={shorten >=-1.3em,shorten <=-0.5em,thick}]
  \draw[l1] (0,1)--node[right=-3pt] {} (5,1) node[xshift=-4.5cm, yshift=.2cm,font=\tiny, scale=0.8, blue] {$\overline{\YY_m^{(2)}}$};
  \draw[l1] (0,2)--node[right=-3pt] {} (5,2) node[xshift=-4.5cm, yshift=.2cm,font=\tiny, scale=0.8, blue] {$\overline{\YY_m^{(1)}}$};
  \draw[l1] (0,3)--node[right=-3pt] {} (5,3) node[xshift=-4.5cm, yshift=.2cm,font=\tiny, scale=0.8, blue] {$\overline{\YY_m^{(0)}}$};
  \draw (1,0) .. controls (1.3,1) and (1.3,1).. (1,2.6) ;
  \draw (1,2.6) .. controls (0.7,3.6) and (0.7,3.6) .. (1,5.3);

  \node at (0.7,.5) [font=\tiny, scale=0.8, blue,below] {$\overline{\mathcal{Y}_5^{(1)}}$};
  \draw (2,0) .. controls (2.3,1) and (2.3,1).. (2,2.6) ;
  \draw (2,2.6) .. controls (1.7,3.6) and (1.7,3.6) .. (2,5.3);

  \node at (1.7,.5) [font=\tiny, scale=0.8, blue,below] {$\overline{\mathcal{Y}_5^{(2)}}$};

  \draw (3.5,0) .. controls (3.8,1) and (3.8,1).. (3.5,2.6) ;
  \draw (3.5,2.6) .. controls (3.2,3.6) and (3.2,3.6) .. (3.5,4) ;

  \node at (3.2,.5) [font=\tiny, scale=0.8, blue,below] {$\overline{\mathcal{Y}_1}$};

  \draw (0,4) .. controls (0.8,3.7) and (0.8,3.7).. (1.5,4) ;
  \draw (1.5,4) .. controls (2.2,4.3) and (2.2,3.7) .. (3,4);

  \node at (0.2,3.8) [font=\tiny, scale=0.8, blue,above] {$\overline{\YY_6}$};

    \draw (0,4.5) .. controls (0.8,4.2) and (0.8,4.2).. (1.5,4.5) ;
  \draw (1.5,4.5) .. controls (2.2,4.8) and (2.2,4.2) .. (3,4.5);

  \node at (0.2,4.3) [font=\tiny, scale=0.8, blue,above] {$\overline{\YY_7}$};

  \draw (0,5) .. controls (0.8,4.7) and (0.8,4.7).. (1.5,5) ;
  \draw (1.5,5) .. controls (2.2,5.3) and (2.2,4.7) .. (3,5);

  \node at (0.2,4.8) [font=\tiny, scale=0.8, blue,above] {$\overline{\YY_8}$};

  \draw (4.3,1.2) .. controls (4.8,0.7) and (4.8,0.7).. (5.3,1.4) ;
  \draw (5.3,1.4) .. controls (5.7,1.82) and (5.7,1.7).. (6.1,1.4) ;
  \draw (6.1,1.4) .. controls (6.5,1.1) and (6.5,1.1) .. (7,1.4);

  \node at (4.2,2.2) [font=\tiny, scale=0.8, blue,above] {$\overline{\YY_2^{(1)}}$};
  
  \draw (4.3,2.2) .. controls (4.8,1.7) and (4.8,1.7).. (5.3,2.4) ;
  \draw (5.3,2.4) .. controls (5.7,2.82) and (5.7,2.7).. (6.1,2.4) ;
  \draw (6.1,2.4) .. controls (6.5,2.1) and (6.5,2.1) .. (7,2.4);

  \node at (4.2,1.2) [font=\tiny, scale=0.8, blue,above] {$\overline{\YY_2^{(2)}}$};
  \draw (4.3,3.2) .. controls (4.8,2.7) and (4.8,2.7).. (5.3,3.4) ;
  \draw (5.3,3.4) .. controls (5.7,3.82) and (5.7,3.7).. (6.1,3.4) ;
  \draw (6.1,3.4) .. controls (6.5,3.1) and (6.5,3.1) .. (7,3.4);

  \node at (4.2,3.2) [font=\tiny, scale=0.8, blue,above] {$\overline{\YY_2^{(0)}}$};
  
  \draw (6,0) .. controls (6.3,1) and (6.3,1).. (6,2.6) ;
  \draw (6,2.6) .. controls (5.7,3.6) and (5.7,3.6) .. (6,4.5);

  \node at (5.7,.5) [font=\tiny, scale=0.8, blue,below] {$\overline{\mathcal{Y}_3}$};

  \draw (6.5,0) .. controls (6.8,1) and (6.8,1).. (6.5,2.6) ;
  \draw (6.5,2.6) .. controls (6.2,3.6) and (6.2,3.6) .. (6.5,4.5);

  \node at (6.8,.5) [font=\tiny, scale=0.8, blue,below] {$\overline{\mathcal{Y}_4}$};
\end{tikzpicture}
\caption{Special Fiber of $\Y$}
\label{pic:Yspecialfiber}
\end{minipage}
\end{figure}
\label{example:allcases}
\end{ex3}
\subsection{Genus of $\overline{Y_t}$}
To get a complete understanding of the special fiber of $\Y$ we only
need to explain how to get the genus of each component from the
cluster and describe the component graph.
Keeping the previous notations,
let $\overline{Y_t}$ be a component of the special fiber of $\Y$ (we do
not assume that it is irreducible), above a component $X$ of $\X$,
corresponding to a cluster $\s$.

\begin{prop} Let $\widetilde{\s}_1, \ldots,\widetilde{\s}_N$ be the
  children of $\s$, let $a_i = |\widetilde{\s}_i|$ and let
  $d:=gcd(n,a_1, \ldots, a_N)$.  Then irreducible components of $\overline{Y_t}$
  have genus
\begin{align*}
\frac {1}{2d}  \left( n(N - 2) - \sum_{i = 1}^{N} \gcd(n,{a}_i) \right) + 1  +
  \begin{cases}
 0   & \textrm{ if } n \mid \sum_{i=1}^{N} {a}_i \\ \frac{n}{2d} - 
 \frac{\gcd(n,\sum_{i=1}^{N} {a}_i)}{2d}  & \textrm{ if } n\nmid \sum_{i=1}^{N} {a}_i
\end{cases}
\end{align*} 
\label{prop:genus}
\end{prop}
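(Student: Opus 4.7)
The plan is to apply Riemann--Hurwitz to each irreducible component $\overline{\YY_t^{(\ell)}}$ regarded as a cyclic cover of $\PP^1$. By Proposition~\ref{prop:numbercomponents}, each such component has an affine equation
\[
\overline{\YY_t^{(\ell)}}:\; y_t^{n/d} \;=\; \zeta_d^{\ell}\, c_t^{1/d}\prod_{i=1}^{N}(x_t-\alpha_i)^{a_i/d},
\]
so, writing $m=n/d$ and $b_i=a_i/d$, it is a cyclic cover $\phi:\overline{\YY_t^{(\ell)}}\to\PP^1$ of degree $m$, defined by $y^m=C\prod_i(x-\alpha_i)^{b_i}$ with $\gcd(m,b_1,\ldots,b_N)=1$ (the latter because we have divided out by $d$, ensuring irreducibility as was shown in Proposition~\ref{prop:numbercomponents}).

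The first step is to read off the ramification data of $\phi$ from this equation. At each finite branch point $\alpha_i$ the local equation is $y^m\sim(x-\alpha_i)^{b_i}\cdot(\text{unit})$, so there are exactly $\gcd(m,b_i)$ points above $\alpha_i$, each with ramification index $m/\gcd(m,b_i)$; the contribution to the Riemann--Hurwitz different is therefore $m-\gcd(m,b_i)$. At $\infty$, setting $u=1/x$ shows that the local exponent is $-\sum_i b_i$: if $m\mid\sum b_i$ the cover is unramified at $\infty$ and contributes $0$, otherwise there are $\gcd(m,\sum b_i)$ points above $\infty$ each with ramification $m/\gcd(m,\sum b_i)$, giving a contribution $m-\gcd(m,\sum b_i)$. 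Writing $\epsilon$ for this last contribution, Riemann--Hurwitz yields
\[
2g-2 \;=\; -2m + \sum_{i=1}^N\bigl(m-\gcd(m,b_i)\bigr)+\epsilon
\;=\; m(N-2)-\sum_{i=1}^N\gcd(m,b_i)+\epsilon,
\]
so
\[
g \;=\; \tfrac{1}{2}\Bigl(m(N-2)-\sum_{i=1}^N\gcd(m,b_i)\Bigr)+1+\tfrac{\epsilon}{2}.
\]

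The second step is a purely arithmetic translation back to $n$ and the $a_i$. Since $d\mid n$ and $d\mid a_i$ for all $i$, one has $\gcd(m,b_i)=\gcd(n/d,a_i/d)=\gcd(n,a_i)/d$, and analogously $\gcd(m,\sum b_i)=\gcd(n,\sum a_i)/d$; likewise the condition $m\mid\sum b_i$ is equivalent to $n\mid\sum a_i$. Substituting these identities into the formula above turns $\tfrac{1}{2}(m(N-2)-\sum\gcd(m,b_i))$ into $\tfrac{1}{2d}(n(N-2)-\sum\gcd(n,a_i))$ and turns $\tfrac{\epsilon}{2}$ into $0$ or $\tfrac{n}{2d}-\tfrac{\gcd(n,\sum a_i)}{2d}$ according as $n\mid\sum a_i$ or not, which is exactly the formula claimed.

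The only place that requires a little care is the irreducibility needed to apply Riemann--Hurwitz to each $\overline{\YY_t^{(\ell)}}$ as a connected degree-$m$ cover of $\PP^1$; this is precisely what Proposition~\ref{prop:numbercomponents} supplies, since after extracting the $d$-th root the remaining equation $y^{m}=C\prod(x-\alpha_i)^{b_i}$ has exponent vector coprime to $m$ and so defines an irreducible cover. With that in hand the rest is bookkeeping: no delicate step is expected beyond carefully tracking the ramification at $\infty$, which is the one place where the two cases in the statement originate.
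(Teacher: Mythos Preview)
Your proof is correct and follows essentially the same approach as the paper's: both apply Riemann--Hurwitz to the irreducible component $\overline{\YY_t^{(\ell)}}$ viewed as a degree $n/d$ cyclic cover of $\PP^1$, read off the ramification at the $\alpha_i$ and at $\infty$, and assemble the genus formula. The only cosmetic difference is that you introduce the intermediate notation $m=n/d$, $b_i=a_i/d$ and then translate back via $\gcd(m,b_i)=\gcd(n,a_i)/d$, whereas the paper works directly with $n$ and the $a_i$ throughout.
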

  \begin{proof}
    Since the genus of a curve equals that of its normalization, we
    can look at the components of $\overline{\YY_t}$. By
    Proposition~\ref{prop:numbercomponents}, we know that the
    components are given by an equation of the form
    $\overline{\YY_t^{\ell}}: y_t^{n/d} = \zeta_d^\ell c^{1/d}
    \prod_{i=1}^N (x_t-\alpha_i)^{a_i/d}$.

    If $\pi : X \to X'$ is a general degree $D$ map between
    non-singular curves, The Riemann-Hurwitz formula (see for example
    \cite[Corollary 2.4]{MR0463157}) implies that
  \[
2g(X)-2 = D(2g(X')-2) + \sum_P (e_P-1),
\]
where $g(X)$ denotes the genus of $X$ and $e_P$ denotes the
ramification degree of $P$. Taking $X = \overline{\YY_t^\ell}$ and $X'=\PP^1$, $g(X')=0$, $D=\frac{n}{d}$ and
  \begin{itemize}
  \item $e_P = 1$ for all points $P \neq \alpha_i$ and $P \neq \infty$,
    
  \item as mentioned before, each point $\alpha_i$ has ramification
    degree $\frac{n}{\gcd(n,a_i)}$ and there are $\frac{\gcd(n,a_i)}{d}$ points above it.
    
  \item If $n \mid \sum_{i=1}^N a_i = \deg(\overline{f_t(x_t)})$,
    $\infty$ is not ramified. Otherwise, it is a ramified point, with
    ramification degree $\frac{n}{\gcd(n,\deg(\overline{f_t(x_t)}))}$
    and $\frac{\gcd(n,\deg(\overline{f_t(x_t)}))}{d}$ points.
  \end{itemize}
   Then Riemann-Hurwitz gives that the genus of $\overline{\YY_t^{(\ell)}}$ equals
\[
  \frac {1}{2}  \left( \frac{n}{d}(N - 2) - \sum_{i = 1}^{N} \frac{\gcd(n,{a}_i)}{d} \right) + 1  +
  \begin{cases}
 0   & \textrm{ if } n \mid \sum_{i=1}^N a_i, \\
 \frac{n}{2d} - \frac{\gcd(n,\sum_{i=1}^N a_i)}{2d} & \textrm{ if } n\nmid \sum_{i=1}^N a_i.
\end{cases}
  \]
\end{proof}
\section{The Galois representation of $\C$}

\subsection{Computing the Galois representation over $K_2$}
Let $\Upsilon = (V,E)$ denote the dual graph of the special fiber of
$\overline{Y}$ (also referred as the graph of components in
\cite{wewers}); it is an undirected graph whose vertices $V$ are the
irreducible components of $\overline{Y}$. The set $E$ contains an
edge joining a pair of vertices for each intersection point of the
corresponding components. Under our hypothesis, the action of
$\Gal(\overline{k}/k)$ on the set $\overline{X}$ is trivial, but its
action on the set of irreducible components of $\overline{Y_t}$ (and
on $\Upsilon$) might not be. Let $\ell$ be a prime with
$\ell \nmid p$. Then by \cite[Lemma 2.7]{wewers} (see also
\cite[Corollary 1.6]{DokGalois}) it follows that as $\QQ_\ell[G_K]$-modules
\begin{equation}
  \label{eq:galrep}
  \cohoet(\overline{Y},\QQ_\ell) = \sum_{\tilde{Y} \in V} \cohoet(\tilde{Y},\QQ_\ell) \oplus {\text{H}}^1(\Upsilon,\ZZ)\otimes_{\ZZ}\QQ_\ell.
\end{equation}
If we consider the Picard group $\Pic^0(Y)$, it contains an abelian
part and a toric one (see for example \cite{Neron}, Example 8, page
246). The rank of the toric part equals the rank of
$\text{H}^1(\Upsilon,\ZZ)$, and its Galois representation consists of
Jordan blocks of size $2$ (see \cite[Proposition 3.5]{Grothendieck},
page 350). The action of $\Gal(\overline{K}/K)$ on $Y(\overline{K})$
extends to a semilinear action on the geometric points of
$\overline{Y}$ (see  \cite[Equation (2,18)]{Tim}, \cite[Corollary 1.6]{DokGalois} and page 13 of
\cite{MR2551757}). In particular, we have an isomorphism of
$G_K$-representations
\[
  V_{\ell}(\Pic^0(Y)) \simeq \left(\left(\text{H}^1(\Upsilon,\ZZ)\otimes_{\ZZ}\QQ_\ell\right) \otimes \Sp_2\right) \oplus \bigoplus_{\tilde{Y} \in V} V_\ell(\Pic^0(\tilde{Y})).
\]
Recall that the rank of $\coho^1(\Upsilon,\ZZ)$ equals $|E| - |V| + 1$
(because the graph is connected).
  \begin{remark}
    Since inertia acts trivially on $\text{H}^1(\Upsilon,\ZZ)$, the
    image of inertia consists of a matrix with
    $\rank_{\ZZ}\coho^1(\Upsilon,\ZZ)$ Jordan blocks of $2 \times 2$
    and the identity elsewhere. The values of the Frobenius element is
    given by its (permutation) action on the components.
\label{rem:inertia}
  \end{remark}

    \begin{thm}
    The rank of $\coho^1(\Upsilon,\ZZ)$ equals
    \[
      \sum_{\tilde{\s}}\gcd(n,|\tilde{\s}|)-\sum_{\s}\gcd(n,|\tilde{\s}_1|,\ldots,|\tilde{\s}_N|) + 1,
      \]
      where the first sum runs over all proper clusters except the
      maximal one, the second sum runs over all proper clusters, and
      the elements $\tilde{\s}_1,\ldots,\tilde{\s}_N$ denote the
      children of $\s$ (which might not be proper).
      \label{thm:cohomologyrank}
    \end{thm}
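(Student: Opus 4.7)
The strategy is to compute the number of vertices $|V|$ and edges $|E|$ of the dual graph $\Upsilon$ and invoke the standard formula $\rank_{\ZZ}\coho^1(\Upsilon,\ZZ) = |E| - |V| + 1$, valid for any connected graph. Connectedness of $\Upsilon$ is immediate from the fact that $\C$ is geometrically integral, so the special fiber of the stable model $\Y$ is connected.

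For the vertex count, Proposition~\ref{prop:numbercomponents} shows that each proper cluster $\s$ in $\im(\Phi)$ contributes $d_\s := \gcd(n,|\tilde{\s}_1|,\ldots,|\tilde{\s}_N|)$ irreducible components of $\bar Y$, where $\tilde{\s}_1,\ldots,\tilde{\s}_N$ are the children of $\s$. Writing $V'$ for the second sum in the statement (over \emph{all} proper clusters), Theorem~\ref{thm:mapPhi}(3) implies $V' = |V|$ unless $\sm \notin \im(\Phi)$, in which case $\sm$ has exactly two children and $V' = |V| + d_{\sm}$.

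For the edge count, Theorem~\ref{thm:geometry} identifies adjacent components of $\overline{X}$ as either parent/child clusters in $\im(\Phi)$, or as the two maximal children $\s_1, \s_2$ of $\sm$ when $\sm \notin \im(\Phi)$. For each intersection point $P \in \overline{X}$, the discussion preceding Proposition~\ref{prop:numbercomponents} shows that the fiber in $\overline{Y}$ above $P$ consists of $\gcd(n, v(f_t))$ points, each contributing one edge of $\Upsilon$. Above the intersection of a child $\tilde{\s}$ with its proper parent $\s$, the point $P$ corresponds to a root $\alpha \in \tilde{\s}$ of multiplicity $|\tilde{\s}|$ in the $\s$-chart, so it contributes $\gcd(n,|\tilde{\s}|)$ edges; one verifies consistency with the child-side computation using $\deg\overline{f_{\tilde{\s}}} = |\tilde{\s}|$. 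Letting $E' := \sum_{\tilde{\s}\neq\sm\text{ proper}} \gcd(n,|\tilde{\s}|)$, this gives $E' = |E|$ whenever $\sm \in \im(\Phi)$.

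In the exceptional case $\sm \notin \im(\Phi)$, the direct intersection between $\s_1$ and $\s_2$ contributes $\gcd(n,|\s_1|)$ edges (not twice), whereas $E'$ includes both $\gcd(n,|\s_1|)$ and $\gcd(n,|\s_2|)$. By Theorem~\ref{thm:mapPhi}(3)(i) we must have $\infty \notin S$ here, forcing $n \mid |\sm| = |\s_1|+|\s_2|$, whence $\gcd(n,|\s_1|) = \gcd(n,|\s_2|) = \gcd(n,|\s_1|,|\s_2|) = d_{\sm}$. Consequently $E' - |E| = d_{\sm} = V' - |V|$, and the two overcounts cancel in $E' - V' + 1 = |E| - |V| + 1$, yielding the formula. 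The main delicate step is the local intersection-point count at a parent/child adjacency, which relies on the explicit local equations~(\ref{eq:fibers}) and the ramification analysis recalled after them; the exceptional cancellation identity is the other subtle point and uses the arithmetic constraint forced by $\infty \notin S$.
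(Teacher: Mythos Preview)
Your argument follows the same line as the paper's: compute $|V|$ via Proposition~\ref{prop:numbercomponents}, compute $|E|$ via the fibre cardinality $\#\varphi_t^{-1}(P)=\gcd(n,v_P(f_t))$ with $v_P(f_t)=|\tilde\s|$, and conclude from $|E|-|V|+1$. The paper's proof stops there and does not separate out the case $\sm\notin\im(\Phi)$; it simply asserts that the two sums equal $|E|$ and $|V|$, which is literally true only when every proper cluster corresponds to a component. Your treatment of the exceptional case is the genuine addition: the observation that $\infty\notin S$ forces $n\mid|\s_1|+|\s_2|$, hence $\gcd(n,|\s_1|)=\gcd(n,|\s_2|)=\gcd(n,|\s_1|,|\s_2|)=d_{\sm}$, so that the overcounts in the two sums are both $d_{\sm}$ and cancel. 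This closes a gap the paper leaves implicit, and your justification of the edge count over the $\s_1$--$\s_2$ node (via the degree of $\overline{f_{\s_1}}$ at infinity) is correct.
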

    \begin{proof}
      Recall that the rank of $\coho^1(\Upsilon,\ZZ)$ equals
      $|E| - |V| + 1$. The value $|V|$ (the number of irreducible
      components) equals the second term by
      Proposition~\ref{prop:numbercomponents}. The number of
      intersection points follows from the discussion after the same
      proposition, that states that
      $\#\varphi_t^{-1}(P)= gcd(n, v_P(f_t))$. Since
      $v_P(f_t) = |\tilde{\s}|$ the result follows.
    \end{proof}

    \begin{ex1}(Continued III). The graph of components (which can be
      read from Figure~\ref{fig:componentgraph}) is given in
      Figure~\ref{fig:graphcomp}. Its rank can be computed using the
      previous theorem, from the cluster description in
      Figure~\ref{fig:cluster1}, implying that
      $\coho^ 1(\Upsilon,\ZZ)$ has rank $7$ (which can be easily
      verified from the graph picture since the component graph
      $\Upsilon$ contains $9$ vertices and $15$ edges.). The advantage
      of Theorem~\ref{thm:cohomologyrank} is that we do not need to
      know the graph of components! (the cluster picture is
      enough).

      In particular, the image of inertia (of the Galois
      representation) equals $7$ Jordan blocks of the form
      $\left(\begin{smallmatrix} 1 & 1\\0 & 1\end{smallmatrix}
      \right)$, and the identity elsewhere.

      Recall by (\ref{eq:galrep}) that the Galois representation of
      $\C$ has two parts, one coming from the components and one
      coming from the graph of components. Let $\sigma$ denotes the
      Frobenius automorphism of $\Gal(K_2^{\text{ur}}/K_2)$.  If we
      want to understand its action on the component
      graph, we need to consider two different cases: if
      $\sqrt{-1} \in K_2$, then all components are defined over $K_2$
      and so are the intersection points, hence its action is trivial
      (and the $2 \times 2$ blocks correspond precisely to the
      classical Steinberg representation). However, if
      $\sqrt{-1} \not \in K_2$, then Frobenius interchanges the two
      components $\overline{\mathcal{Y}_1^{(1)}}$ and
      $\overline{\mathcal{Y}_1^{(2)}}$. A basis for the graph
      cohomology are the cycles:
\begin{multicols}{2}
      \begin{itemize}
      \item $e_1=\{,\overline{\mathcal{Y}_m^{(1)}},\overline{\mathcal{Y}_1^{(1)}},\overline{\mathcal{Y}_4},\overline{\mathcal{Y}_1^{(2)}}\}$,
      \item  $e_2=\{,\overline{\mathcal{Y}_m^{(1)}},\overline{\mathcal{Y}_1^{(1)}},\overline{\mathcal{Y}_3},\overline{\mathcal{Y}_1^{(2)}}\}$,
        
      \item $e_3=\{,\overline{\mathcal{Y}_m^{(1)}},\overline{\mathcal{Y}_1^{(1)}},\overline{\mathcal{Y}_2},\overline{\mathcal{Y}_1^{(2)}}\}$,
        
      \item $e_4=\{,\overline{\mathcal{Y}_m^{(1)}},\overline{\mathcal{Y}_1^{(1)}},\overline{\mathcal{Y}_m^{(2)}},\overline{\mathcal{Y}_5}\}$,

      \item $e_5=\{,\overline{\mathcal{Y}_m^{(1)}},\overline{\mathcal{Y}_1^{(2)}},\overline{\mathcal{Y}_m^{(2)}},\overline{\mathcal{Y}_5}\}$,
      \item $e_6=\{,\overline{\mathcal{Y}_m^{(1)}},\overline{\mathcal{Y}_1^{(1)}},\overline{\mathcal{Y}_m^{(3)}},\overline{\mathcal{Y}_5}\}$,

      \item $e_7=\{,\overline{\mathcal{Y}_m^{(1)}},\overline{\mathcal{Y}_1^{(2)}},\overline{\mathcal{Y}_m^{(3)}},\overline{\mathcal{Y}_5}\}$.
      \end{itemize}
\end{multicols}
Clearly $\sigma$ fixes $e_1, e_2, e_3$, while it interchanges
$e_4 \leftrightarrow e_5$ and $e_6 \leftrightarrow e_7$. Then
$\{e_1,e_2,e_3,e_4+e_5,e_6+e_7,e_4-e_5,e_6-e_7\}$ is a basis of
eigenvectors for $\sigma$ and the Galois representation on this basis
consists of $4$ copies of the Steinberg representation, and $3$ copies
of a twist of the Steinberg representation by the unramified quadratic
extension $K_2(\sqrt{-1})/K_2$.

Note that the sum of the genera of the components equals $12$, and
$7+12=19$ which is the genus of $\C$ (as it should be).
      
      \tikzstyle{vertex}=[circle,fill=black!25,minimum size=20pt,inner
      sep=0pt] \tikzstyle{selected vertex} = [vertex, fill=red!24]
      \tikzstyle{edge} = [draw,thick,-] \tikzstyle{edge2} =
      [draw,thick,-,red] \tikzstyle{edge3} = [draw,thick,-,blue]
      \tikzstyle{weight} = [font=\small] \tikzstyle{selected edge} =
      [draw,line width=5pt,-,red!50] \tikzstyle{ignored edge} =
      [draw,line width=5pt,-,black!20]
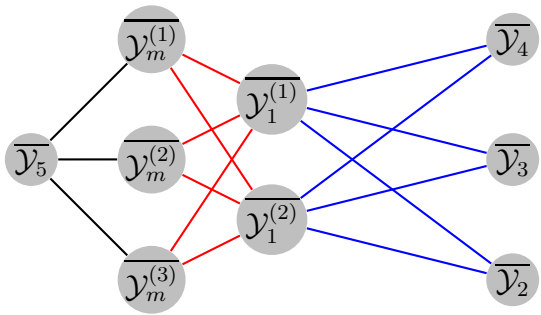
\begin{figure}[h]
\begin{tikzpicture}[scale=1.6, auto,swap]
  \foreach \pos/\name/\lbl in {{(0,3)/a/{\overline{\YY_5}}},
    {(1,4)/b/{\overline{\YY_m^{(1)}}}}, {(1,3)/c/{\overline{\YY_m^{(2)}}}}, {(1,2)/d/{\overline{\YY_m^{(3)}}}},
    {(2,3.5)/e/{\overline{\YY_1^{(1)}}}}, {(2,2.5)/f/{\overline{\YY_1^{(2)}}}},
    {(4,2)/g/{\overline{\YY_2}}}, {(4,3)/h/{\overline{\YY_3}}}, {(4,4)/i/{\overline{\YY_4}}}}
  \node[vertex] (\name) at \pos {$\lbl$};

\foreach \source/ \dest in {b/a, c/a,d/a}
        \path[edge] (\source) -- node[weight] {} (\dest);
\foreach \source/ \dest /\weight in {e/b, e/c,e/d,
                                     f/b,f/c,f/d}
\path[edge2] (\source) -- node[weight] {} (\dest);
\foreach \source/ \dest /\weight in {g/e,g/f,
                                     h/e,h/f,
                                     i/e,i/f}
\path[edge3] (\source) -- node[weight] {} (\dest);
\end{tikzpicture}
\caption{The component graph $\Upsilon$.}
\label{fig:graphcomp}
\end{figure}

\end{ex1}
\begin{ex3}[Continued] From the cluster picture (see
  Figure~\ref{fig:cluster5}) and Theorem~\ref{thm:cohomologyrank} we
  get that $\coho^ 1(\Upsilon,\ZZ)$ has rank $14$, hence the image of
  inertia equals $14$ Jordan blocks of size $2 \times 2$.  The
  component graph $\Upsilon$ contains $14$ vertices and $27$ edges
  (which can be read from Figure~\ref{pic:Yspecialfiber}). The sum of
  the genera of the components equals $20$, and $20+14=34$ which is
  the genus of $\C$.

  A similar analysis as the one made in the previous example can be
  used to determine the graph component representation.  For the
  components $\overline{\YY_m^{(i)}}$ and $\overline{\YY_5^{(j)}}$ be
  defined over $K$ we need $\sqrt{-2}$ be in
  $K_2$. Start supposing this is the case. The group $G_{K_2}$ fixes the
  vertices of the graph, but still might not fix the edges
  (corresponding to the intersection points). The intersection of
  $\overline{\YY_m^{(i)}}$ with $\overline{\YY_5^{(j)}}$ correspond to
  two points with coordinates in $K_2(\sqrt{-2\zeta_3^i})$ which is
  fixed by $G_{K_1}$ under our hypothesis, but the intersection of
  $\overline{\YY_m^{(i)}}$ with $\overline{\YY_2^{(i)}}$ correspond to
  two points with coordinates in $K_2(\sqrt{-\zeta_3^i})$.
    
  In particular, if $\sqrt{-1}$ also belongs to $K_2$, the Galois
  representation attached to $\C$ decomposes as a direct sum of
  dimensions $8 + 8 + 8 + 2 + 2 + 4 + 4 + 4$ (corresponding to the
  curves $\overline{\YY_1}$, $\overline{\YY_3}$, $\overline{\YY_4}$,
  $\overline{\YY_5^ {(0)}}$, $\overline{\YY_5^ {(1)}}$,
  $\overline{\YY_6}$, $\overline{\YY_7}$ and $\overline{\YY_8}$
  respectively) and $14$ blocks where the action of Frobenius is
  trivial and a generator of inertia acts as
  $\left( \begin{smallmatrix} 1 & 1 \\ 0 & 1 \end{smallmatrix}
  \right)$ (corresponding to the Steinberg representation). However,
  if $\sqrt{-1} \not \in K_2$, then $G_{K_2}$ permutes the two edges joining
  the vertices $\overline{\YY_m^{(i)}}$ and $\overline{\YY_2^{(i)}}$
  . This implies that three of the fourteen blocks have Frobenius
  acting by $-1$, while the other eleven ones keep the trivial
  action. Note that since $p \nmid 6$ all the extensions are
  unramified, so the image of inertia does not change.

  Suppose that $\sqrt{-2} \not \in K_2$, then $G_{K_2}$
  permutes the two components $\overline{\YY_5^ {(0)}}$ and
  $\overline{\YY_5^ {(1)}}$ (with its respective intersection points),
  which induces another involution on the graph of components. In this
  case, if $\sqrt{-1} \in K_2$, five Jordan blocks have Frobenius acting
  by $-1$ while the other seven ones have trivial action while if
  $\sqrt{-1} \not \in K_2$, eight blocks have Frobenius acting by $-1$
  and by $1$ on the other five blocks. The remaining cases can be
  studied similarly.
  \end{ex3}

\begin{remark}
  We want to emphasize that when restricting the Galois representation
  to $K_2$, the image of inertia depends only on the components graph,
  which can easily be read from the cluster picture. The same is true over $K$ considering \emph{weighted clusters} that will be introduced in the next section.
\end{remark}

  \section{The Galois representation over $K$}
\label{section:repoverK}
  \subsection{Weighted Cluster}
\label{section:weightedcluster}
Recall from Proposition~\ref{prop:numbercomponents} that the
components $\overline{Y_t}$ correspond to equations where the leading
coefficient of $f(x)$ might not be $1$. This corresponds to a
``twist'' of the representation. To determine whether the involved
character is ramified or not it is important the notion of \emph{weighted
  clusters}.

\begin{definition}
  Let $\mathfrak{s}$ be a proper cluster (i.e.
  $\mathfrak{s} \neq \mathcal{R}$). Define its \emph{relative diameter} (that will be denoted $d_{\mathfrak{s}}$) by
\begin{equation}\notag
d_{\mathfrak{s}} = \mu_{\mathfrak{s}} - \mu_{P(\mathfrak{s})}
\end{equation}
where $P(\mathfrak{s})$ denotes the parent of $\mathfrak{s}$.

\end{definition}

Following\cite{Tim}, in a cluster picture we include the relative
diameters as follows: in a maximal cluster include a subscript
denoting its diameter; for all other clusters include a
subscript given by their relative diameter (that is the difference between
their diameters and that of their parent cluster).

\begin{ex1} Recall that Table~\ref{table:radii} gives the 
diameters $\mu_{\sm} = 0$, $\mu_{\s_1} = \mu_{\s_5} = 1$, $\mu_{\s_2}
=\mu_{\s_3} = \mu_{\s_4}=2$. Then their relative diameter equal
\begin{align*}
&d_{\s_1} = \mu_{\s_1}-\mu_{\sm} = 1, \; d_{\s_5} = \mu_{\s_5} - \mu_{\sm},\\
& d_{\s_2}= \mu_{\s_2}-\mu_{\s_1} = 1, d_{\s_3} = \mu_{\s_3}-\mu_{\s_1} = 1, \; d_{\s_4} = \mu_{\s_4}-\mu_{\s_1} = 1. 
\end{align*}
giving the following weighted cluster.
$$
\scalebox{1.6}{\clusterpicture
 \Root {1} {first} {r1};
  \Root {} {r1} {r2};
  \Root {3} {r2} {r3};
  \Root {} {r3} {r4};
  \Root {3} {r4} {r5};
  \Root {} {r5} {r6};
  \Root {5} {r6} {r7};
  \Root {} {r7} {r8};
  \Root {} {r8} {r9}
   \ClusterLDName c1[][1][\s_2] = (r1)(r2);
  \ClusterLDName c2[][1][\s_3] = (r3)(r4);
   \ClusterLDName c3[][1][\s_4] = (r5)(r6);
   \ClusterLDName c4[][1][\s_1] = (c1)(c2)(c3)(c1n)(c2n)(c3n);
  \ClusterLDName c5[][1][\s_5] = (r7)(r8)(r9);
  \ClusterLDName c6[][0][\sm] = (c4)(c5)(c4n)(c5n);
\endclusterpicture}
$$
\end{ex1}
Given $\mathfrak{s}_1, \mathfrak{s}_2$ two clusters (or roots) let
$\mathfrak{s}_1 \wedge \mathfrak{s}_2$ denote the smallest cluster
that contains both of them. For instance, in the previous example
$0 \wedge 1 = \mathcal{R}$, $\s_2 \wedge \s_3 = \s_1$,
$\s_2 \wedge p+p^2 = \s_1$. Keep the notation of the previous
sections, and let $e_t$ be the valuation of the content of the
polynomial $f(x_t)$ (in particular $e_t = v(c_t)$).
\begin{prop} If $t \in T$ corresponds to a component of the special
  fiber of $\overline{X}$ associated to a cluster $\mathfrak{s}$, the
  content valuation of the polynomial $f(x_t)$ equals
\begin{equation}\notag
e_t = \sum_{r \in \R} \mu_{r \wedge \mathfrak{s}}. 
\end{equation}
\label{proposition:weightedcluster}
\end{prop}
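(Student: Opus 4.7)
The plan is to compute the content valuation directly from the factorization $f(x) = \prod_{r \in \R}(x-r)$ by using the change of variable and Gauss's lemma, identifying each local contribution with a cluster diameter.

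First I would fix notation: since $\Phi(t) = \mathfrak{s}$ and we may assume $t = (a,b,c)$ with $a \in \mathfrak{s}$, the change of variable reads $x = \pi^{\mu_{\mathfrak{s}}} x_t + a$, so
\[
f(x_t) = \prod_{r \in \R}\bigl(\pi^{\mu_{\mathfrak{s}}} x_t - (r-a)\bigr).
\]
By Gauss's lemma (equivalently, by the fact that the valuation of the content is additive over products of polynomials over a DVR), the content valuation of $f(x_t)$ is the sum of the content valuations of the linear factors:
\[
e_t \;=\; \sum_{r \in \R} \val\bigl(\text{cont}(\pi^{\mu_{\mathfrak{s}}} x_t - (r-a))\bigr) \;=\; \sum_{r \in \R} \min\{\mu_{\mathfrak{s}}, v(r-a)\}.
\]

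The key step is the identification $\min\{\mu_{\mathfrak{s}}, v(r-a)\} = \mu_{r \wedge \mathfrak{s}}$ for every $r \in \R$. For this I would first prove the lemma that for any two roots $\alpha,\beta \in \R$ one has $v(\alpha-\beta) = \mu_{\alpha \wedge \beta}$: the inequality $v(\alpha-\beta) \ge \mu_{\alpha\wedge\beta}$ is immediate from the definition of diameter, and if the inequality were strict, the disc $D(\alpha, v(\alpha-\beta)) \cap \R$ would be a strictly smaller cluster containing both, contradicting minimality of $\alpha \wedge \beta$. Applying this with $\beta = a$ gives $v(r-a) = \mu_{r \wedge a}$, and I would then split into two cases:

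If $r \in \mathfrak{s}$, then $r \wedge \mathfrak{s} = \mathfrak{s}$ and $r \wedge a \subseteq \mathfrak{s}$, so $v(r-a) = \mu_{r \wedge a} \ge \mu_{\mathfrak{s}}$ and the minimum equals $\mu_{\mathfrak{s}} = \mu_{r \wedge \mathfrak{s}}$. If $r \notin \mathfrak{s}$, I would show that $r \wedge a = r \wedge \mathfrak{s}$: clearly $\mathfrak{s} \subseteq r \wedge a$ (since $a \in \mathfrak{s}$ and $r \wedge a$ is a cluster containing $a$ that, being larger than $\mathfrak{s}$ or disjoint, must contain $\mathfrak{s}$ by the nested/disjoint property from Lemma~2.2), and conversely any cluster containing $\mathfrak{s}$ and $r$ contains $\{a,r\}$, so $r \wedge a \subseteq r \wedge \mathfrak{s}$. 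In this case $\mu_{r \wedge \mathfrak{s}} < \mu_{\mathfrak{s}}$, so the minimum equals $\mu_{r \wedge \mathfrak{s}}$. Summing over $r \in \R$ yields the desired formula.

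The main obstacle is the bookkeeping in the case $r \notin \mathfrak{s}$, specifically verifying that the smallest cluster containing $\{a,r\}$ coincides with the smallest cluster containing $\mathfrak{s} \cup \{r\}$; this uses in an essential way the nested/disjoint property of clusters. Everything else is a direct computation. Note that the formula is independent of the choice of the representative root $a \in \mathfrak{s}$, as expected, since $\mu_{r \wedge \mathfrak{s}}$ does not depend on $a$.
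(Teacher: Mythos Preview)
Your proof is correct and follows essentially the same approach as the paper: both write $f(x_t)$ as a product of linear factors in $x_t$, use multiplicativity of the content to reduce to computing $\min\{\mu_{\mathfrak{s}}, v(r-a)\}$ for each root, and then split into the cases $r \in \mathfrak{s}$ and $r \notin \mathfrak{s}$ to identify this minimum with $\mu_{r \wedge \mathfrak{s}}$. Your version is slightly more detailed (explicitly invoking Gauss's lemma and the auxiliary fact $v(\alpha-\beta)=\mu_{\alpha\wedge\beta}$, and justifying $r\wedge a = r\wedge\mathfrak{s}$ via the nested/disjoint property), but the argument is the same.
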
 

\begin{proof}
  Recall that if $t$ corresponds to a cluster
  $\mathfrak{s}= D(\alpha, \mu_{\mathfrak{s}})$ then
  $x = \pi^{\mu_{\mathfrak{s}}}x_t + \alpha$ where
  $\alpha \in \mathfrak{s}$ and
\begin{align*}
f(x_t) = \prod_{r \in \R} (\pi^{\mu_{\mathfrak{s}}}x_t + \alpha - r).
\end{align*} 
Each factor $(\pi^{\mu_{\mathfrak{s}}}x_t + \alpha - r)$ has
content valuation
$\min\lbrace \mu_{\mathfrak{s}}, v(\alpha - r) \rbrace$ contributing to the
content valuation $c_t$ of $f(x_t)$. Consider the following two cases:
\begin{itemize}
\item If $r \in \s$ then $\min\lbrace \mu_{\mathfrak{s}}, v(\alpha - r) \rbrace = \mu_\s = \mu_{\s \wedge r}$.
  
\item Otherwise, $\min\lbrace \mu_{\mathfrak{s}}, v(\alpha - r) \rbrace = v(\alpha -r) = \mu_{\s \wedge r}$ as well.
\end{itemize}
Then the formula follows.
\end{proof}

\subsection{Decomposing the representation of $\C$}
\label{subsection:decomposing}
A good reference for details on this section is \cite{MR790953}.  Let
$G$ denote the group $\mu_n$ of $n$-th roots of unity, whose group
algebra equals
\begin{equation}
  \label{eq:endomorphism}
\QQ[G] = \QQ[t]/(t^n-1)\simeq \prod_{d \mid n}\QQ[t]/\phi_d(t),  
\end{equation}
where $\phi_d(t)$ denotes the $d$-th cyclotomic polynomial (with
complex roots the primitive $d$-th roots of unity). Fix $\zeta_n$ a
primitive $n$-th root of unity (which belongs to $K$). The group $G$
acts on $\C$ via $t\cdot(x,y) = (x,\zeta_n y)$. This action extends to
an action of $\QQ[G]$ in
$\Aut^0(\Jac(\C)) := \Aut(\Jac(\C))\otimes_\ZZ \QQ$. Let
$V_\ell(\Jac(\C))$ denote the $\QQ_\ell$ Tate module $T_\ell(\Jac(\C))\otimes_{\ZZ_\ell}\QQ_\ell$. The
natural injective morphism
$\End(\Jac(\C))\otimes \QQ_\ell \hookrightarrow \End(V_\ell(\Jac(\C))$
gives an action of $\QQ_\ell[G]$ on $V_\ell(\Jac(\C))$.

If $H$ is a subgroup of $G$ (corresponding necessarily to the group of
$d$-th roots of unity for some $d \mid n$) we have a natural
surjective map $\pi_H : \C \to \C/H:=\C_H$. In particular, if
$H = H_{n/d}$ (corresponding to $\mu_{n/d}$), denote the quotient curve $\C/H$ by
$\C_d$, with equation:
\begin{equation}
  \label{eq:C_d}
  \C_d: y^{d}=f(x).
\end{equation}
The quotient map is given explicitly by $\pi_d(x,y) = (x,y^{n/d})$ (an $n/d$ to $1$
map). This induces two morphisms between $\Jac(\C)$ and $\Jac(\C_d)$
namely the push-forward $\pi_*:\Jac(\C) \to \Jac(\C_d)$ and the
pullback $\pi_d^*:\Jac(\C_d) \to \Jac(\C)$ whose kernel is contained
in the $n/d$-torsion of $\Jac(\C_d)$. Let $A_d$ denote the connected
component of $\ker(\pi_*)$. For any prime $\ell$ we get an
injective morphism on the $\QQ_\ell$-Tate modules
$\pi^*_\ell:V_\ell(\Jac(\C_d)) \to V_\ell(\Jac(\C))$ and 
\[
V_\ell(\Jac(\C))= V_\ell(A_d) \oplus \pi^*_\ell(V_\ell(\Jac(\C_d))).
  \]
  The group $\mu_{d}$ acts on $\C_d$. For any
  $\alpha \in \QQ[\mu_{d}]$ let
  $\pi^*(\alpha) = \frac{d}{n}(\pi_d^* \circ \alpha \circ
  \pi_*)$. Then (see the proof of Proposition 2 in \cite{MR790953})
  $\pi^*(\alpha)|_{V_\ell(A)} = 0$ and
  $\pi^*(\alpha)|_{\pi^*_\ell(V_\ell(\Jac(\C_d)))} = \alpha$.

  In particular, the Galois representation attached to the curve
  $y^n=f(x)$ contains for each $d \mid n$ what might be called a
  \emph{$d$-new} part coming from the curve $y^d=f(x)$ and
  $V_\ell(\Jac(\C)) = \bigoplus_{d \mid n}
  V_\ell(\Jac(\C_{d}))^{d\text{-new}}$.  Then in the
  decomposition~(\ref{eq:endomorphism}) the action of the group
  algebra $\QQ_\ell[t]/\phi_d(t)$ on $V_\ell(\Jac(\C))$ is non-trivial
  precisely in the subspace corresponding to
  $V_\ell(\Jac(\C_d))^{d\text{-new}}$.

  \begin{ex*}
    Suppose that $n = p \cdot q$ with $p,q$ distinct prime
    numbers. Then 
    \[
V_\ell(\Jac(\C)) =  V_\ell(\Jac(\C))^{pq\text{-new}} \oplus V_\ell(\Jac(\C_p)) \oplus V_\ell(\Jac(\C_q)),
\]
where $V_\ell(\Jac(\C))^{pq\text{-new}}=V_\ell(A_p) \cap V_\ell(A_q)$.
The group algebra $\QQ[t]/\phi_{pq}(t)$ acts non-trivially on the
first summand, $\QQ[t]/\phi_p(t)$ on the second and $\QQ[t]/\phi_q(t)$
on the third one.
  \end{ex*}
An explicit description of $V_\ell(\Jac(\C))^{n\text{-new}}$ can be given  as virtual representations using the inclusion-exclusion principle. 

\begin{remark}
  The contribution from $H = G$ in the above formula is trivial, as it
  corresponds to a genus $0$ curve. This is the reason why one can
  remove the term with $d=1$ in (\ref{eq:endomorphism}).
\end{remark}

\subsection{Twisting} Let $c \in K$ be a non-zero element,
$f(x) \in K[x]$ and consider the following two curves:
\[
  \C:y^n = f(x),
  \]
and
\[
\C':y^n =c\cdot f(x).
  \]
  It is clear that they become isomorphic over the (abelian) extension
  $K(c^{1/n})$, in particular, they are a twist of each other. 

  \smallskip
  
  {\noindent \bf Problem:}  what is the relation between the
Galois representations of $\C$ and that of $\C'$?

\smallskip

This problem appears in different contexts. For example, if we start
with a monic polynomial $f(x)$ (which we assumed was the case) and
want to consider a general polynomial (with all roots in $K$), we need
to understand twists. Also while computing the semistable model (in
Proposition~\ref{prop:numbercomponents}) the components involve taking
twists by a $d$-th root of $c_t$. The hypothesis $\zeta_n \in K$
implies that the extension associated to the twist is a Galois one,
hence the extension $K(\sqrt[n]{c})/K$ corresponds to a Hecke character.

The problem is probably known to experts (as happens for example in
the case of an elliptic curve twisted by a quadratic character, or an
elliptic curve with CM by $\ZZ[\zeta_3]$ while twisted by a cubic or
sextic character) but we did not find a good reference in the
literature, so we briefly explain it.

Since both curves become isomorphic over the extension
$K[\sqrt[n]{c}]$ their representations must be related by some sort of
twist. More concretely, if we base extend $\C$ to $L=K[\sqrt[n]{c}]$
(let $\C_L$ denote such curve) and we do the same to $\C'$ then both
curves become isomorphic, hence their Galois representations are the
same. Recall that the representation
attached to $\Jac(\C)$ and $\Res_{K}(\Jac(\C_L))$ are related via
\begin{equation}
  \label{eq:extres}
  V_\ell(\Res_{K}(\Jac(\C_L))) = \bigoplus_{\chi} V_\ell(\Jac(\C))\otimes \chi,
\end{equation}
where $\chi$ ranges over the characters of the (abelian) group $\Gal(L/K)$.

However this picture is a little misleading, as it is not true that
$V_\ell(\Jac(\C'))$ equals $V_\ell(\Jac(\C)) \otimes \chi$ (for some
character $\chi$) in general (note that the latter does not have the
right determinant for example). What happens is that $V_\ell(\Jac(\C))$
(respectively $V_\ell(\Jac(\C'))$) has a decomposition (as explained
in Section~\ref{section:weightedcluster}) of the form:
\[
  V_\ell(\Jac(\C)) = \bigoplus_{d \mid n}V_\ell(\Jac(\C_d))^{d\text{-new}}.
\]
Recall that $\QQ_\ell[t]/\phi_d(t)$ acts on
$V_\ell(\Jac(\C_d))^{d\text{-new}}$, hence the latter admits a
decomposition in terms of the action of the $d$-th roots of unity.
Concretely. pick a basis for the order $\ell^n$ points (for each
n) as a $\ZZ[\zeta_d]$-modules instead of taking one as a
$\ZZ$-module. Once a $d$-th root of unity (say $\zeta_d$) is chosen
inside the automorphism group of $\Jac(\C)$, we get the decomposition
\begin{equation}
  \label{eq:unityrootsdecomposition}
  V_\ell(\Jac(\C_d))^{d\text{-new}} = \mathop{\bigoplus_{i=1}^{d}}_{\gcd(i,d)=1}V_\ell^{(i)}(\Jac(\C_d))^{d\text{-new}},
\end{equation}
as $\QQ_\ell[\Gal_{K}]$-modules where $t$ acts on
$V_\ell^{(i)}(\Jac(\C_d))^{d\text{-new}}$ as $\zeta_d^i$. There is an
explicit character $\chi$ (depending on $d$ and $c$) such that
\begin{equation}
  \label{eq:twist}
  V_\ell^{(i)}(\Jac(\C_d))^{d\text{-new}} \simeq V_\ell^{(i)}(\Jac(\C_d'))^{d\text{-new}} \otimes \chi^i.  
\end{equation}
To describe it fix $\zeta_n$ an $n$-th root of
unity in $K$. Such a choice determines an element (abusing
notation) $\zeta_n\in \End(\Jac(\C))$ and an element $\zeta_n$
(abusing notation again) in $V_\ell(\Jac(\C))$ (its image under the
map
$\End(\Jac(\C))\otimes \ZZ_\ell
\hookrightarrow \End(T_\ell(\Jac(\C)))$).
  \begin{lemma}
    Let $L=K[\sqrt[n]{c}]$, let $r=[L:K]$, and let
    $V_\ell^{(i)}(\Jac(\C_d))^{d\text{-new}}$ denote the subspaces in the decomposition
    (\ref{eq:unityrootsdecomposition}). Let $\sigma \in \Gal(L/K)$ be
    the generator sending $\sqrt[n]{c}$ to $\zeta_n^{n/r} \sqrt[n]{c}$
    and let $\chi:\Gal(L/K) \to \overline{\QQ_\ell}$ denote the
    character sending $\sigma$ to $\zeta_n^{n/r}$.  Then for all
    $1 \le i \le n$, prime to $n$ we have
    \[
       V_\ell^{(i)}(\Jac(\C_d))^{d\text{-new}} \simeq V_\ell^{(i)}(\Jac(\C_d'))^{d\text{-new}}\otimes \chi^i.
  \]
\label{lemma:twist}
  \end{lemma}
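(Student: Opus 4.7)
The plan is to realize the twist geometrically via an explicit isomorphism $\psi:\C_L\to\C'_L$, push forward to the Tate modules, and identify the character $\chi^i$ as the scalar action of the resulting Galois $1$-cocycle on each eigenspace $V_\ell^{(i)}(\Jac(\C_d))^{d\text{-new}}$. The map is the obvious one, $\psi(x,y)=(x,c^{1/n}y)$: it is well defined because $(c^{1/n}y)^n=c\cdot f(x)$, and a direct check on coordinates gives $\psi\circ t = t\circ \psi$ where $t\in\mu_n$ acts by $(x,y)\mapsto(x,\zeta_n y)$ on both $\C$ and $\C'$. Hence $\psi_*$ is $\mu_n$-equivariant on Tate modules, and by the functoriality of the decomposition of Section~\ref{subsection:decomposing} it restricts to an $L$-linear isomorphism
\[
\psi_*: V_\ell^{(i)}(\Jac(\C_d))^{d\text{-new}}\otimes_K L \;\xrightarrow{\sim}\; V_\ell^{(i)}(\Jac(\C_d'))^{d\text{-new}}\otimes_K L.
\]

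Next I would compute the failure of $\psi$ to descend to $K$. Given $\sigma\in G_K$, write $\sigma(c^{1/n})=\zeta_n^{k(\sigma)}c^{1/n}$; the integer $k(\sigma)$ is defined mod $n$ and factors through $\Gal(L/K)$. A one-line calculation on coordinates gives $\sigma(\psi)=\psi\circ t^{k(\sigma)}$, whence on Tate modules
\[
\rho_{\C'}(\sigma)\circ \psi_* \;=\; \psi_*\circ t_*^{k(\sigma)}\circ \rho_{\C}(\sigma).
\]
Since $\zeta_n\in K$ the automorphism $t$ is $K$-rational, so $t_*^{k(\sigma)}$ is $G_K$-equivariant and the displayed identity exhibits a genuine cocycle $\xi(\sigma):=t_*^{k(\sigma)}$ relating the two representations after transport by $\psi_*$. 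Restricting to $V_\ell^{(i)}(\Jac(\C_d))^{d\text{-new}}$, where $t$ acts through the quotient $\mu_n\twoheadrightarrow\mu_d$ as the scalar $\zeta_d^i$, the cocycle $\xi$ becomes multiplication by $\zeta_d^{i\cdot k(\sigma)}$. For the chosen generator $\sigma\in\Gal(L/K)$ with $k(\sigma)=n/r$, this scalar equals the value at $\sigma$ of the one-dimensional character $\chi^i$ appearing in the statement, so the two representations differ on each eigenspace precisely by the character $\chi^i$, as claimed.

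The main obstacle is the last step: matching the scalar $\zeta_d^{i\,k(\sigma)}$ produced by the cocycle with the value $\chi^i(\sigma)=\zeta_n^{i n/r}$ prescribed in the statement. This is a Kummer-theoretic identification that depends on the chosen compatible system of primitive roots of unity $\zeta_n\in K$ and $\zeta_d=\zeta_n^{n/d}$, together with the observation that the isomorphism $\C_d\simeq\C_d'$ actually descends to the subextension $K[c^{1/d}]\subseteq L$, so on each $V_\ell^{(i)}(\Jac(\C_d))^{d\text{-new}}$ the twist factors through the relevant quotient of $\Gal(L/K)$. Once this bookkeeping is carried out, everything else is formal: the pushforward is $\mu_n$-equivariant, the cocycle computation is a one-line substitution, and the lemma follows from the standard twist formalism.
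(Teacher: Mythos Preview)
Your proposal is correct and follows essentially the same approach as the paper: define the isomorphism $\psi(x,y)=(x,c^{1/n}y)$, compute the Galois cocycle $\sigma(\psi)=\psi\circ t^{k(\sigma)}$, and read off the scalar on each $\mu_n$-eigenspace. Your version is in fact more detailed than the paper's (which compresses the cocycle computation into the single identity $\tilde\sigma\circ\varphi=\zeta_n^{n/r}\cdot\varphi\circ\tilde\sigma$ and then immediately concludes), and you correctly flag the root-of-unity bookkeeping in the final identification, which the paper leaves implicit.
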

  \begin{proof}
    Let $\varphi:\C \to \C'$ be the map
    $\varphi(x,y) = (x,\sqrt[n]{c}\, y)$ and let
    $\tilde{\sigma} \in \Gal_{K}$ be such that its restriction to
    $K[\sqrt[n]{c}]$ equals $\sigma$. We claim that
    \begin{equation}
      \label{eq:relation}
\tilde{\sigma}\circ \varphi = \zeta_n^{n/r} \cdot \varphi \circ \tilde{\sigma}.      
    \end{equation}
    If we compute both maps on a point $(x,y)$, the left hand side equals
    \[
      (\tilde{\sigma}(\sqrt[n]{c})\cdot\tilde{\sigma}(x),\tilde{\sigma}(y))=(\zeta_n^{n/r}\tilde{\sigma}(x),\tilde{\sigma}(y)),
    \]
    which clearly equals the right hand side hence the claim. The
    result follows easily from (\ref{eq:relation}) recalling that on
    $V_\ell^{(i)}(\Jac(\C_d)^{d\text{-new}})$ the element $t$ acts by
    $(\zeta_n^{n/r})^i$.
  \end{proof}

  Note that there are two different types of twisting affecting the
  Galois representation, and the L-series $p$-th factor, namely
  unramified and ramified ones. Unramified twists already appeared
  while computing the Galois representation of Example~\ref{ex:1} (in
  page $17$). They affect the value of Frobenius, but does not change
  the image of inertia. To compute such twists on the components of
  positive genus, it is probably easier to compute the number of
  points of the components of the twisted curve rather than assuming
  the polynomial $f(x)$ is monic and then computing the twist (see
  \cite{Drew} for a fast method to count the number of points).

  Ramified twists on the contrary affects the image of inertia. The
  use of weighted cluster is very handful to distinguish whether the
  twist by $c_t$ appearing on the components of
  Proposition~\ref{prop:numbercomponents} are ramified or not.
  \begin{prop}
    Let $t$ be a component of $X$ corresponding to a cluster
    $\s$. Then the components of $\YY_t^{(\ell)}$ are ramified twists
    of a non-singular superelliptic curve precisely when $d \nmid e_t$.
  \end{prop}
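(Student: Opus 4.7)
The plan is to compare the equation of $\overline{\YY_t^{(\ell)}}$ in the coordinate $y_t$ of Proposition~\ref{prop:numbercomponents} (which is defined over $K_2$) with its expression in terms of the $K$-rational coordinate $y$ on $\C$, and then read off the field of individual definition of each component from the induced $\Gal(K_2K^{\mathrm{ur}}/K^{\mathrm{ur}})$-action.

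First, I recall from Proposition~\ref{prop:numbercomponents} the component equation $\overline{\YY_t^{(\ell)}}\colon y_t^{n/d}=\zeta_d^\ell c_t^{1/d}\,g(x_t)$, where $g(x_t)=\prod_i(x_t-\alpha_i)^{a_i/d}$, and identify the ``non-singular superelliptic curve'' as (the normalization of) $z^{n/d}=g(x_t)$, which is defined over $K$. The twist scalar from this standard curve to the component is $\zeta_d^\ell c_t^{1/d}$. Since $c_t$ is a unit in $K$, $\zeta_n\in K$ and $p\nmid n$, both $\zeta_d$ and $c_t^{1/d}$ already lie in $K^{\mathrm{ur}}$, so the only possible source of ramification is the relation $y_t=y\,\pi^{-e_t/n}$ between $y_t$ and the $K$-rational coordinate $y$ of $\C$.

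Second, I analyze the action of $\sigma\in\Gal(K_2K^{\mathrm{ur}}/K^{\mathrm{ur}})$ defined by $\sigma(\pi^{1/n})=\zeta_n\pi^{1/n}$, which fixes $K^{\mathrm{ur}}$ and in particular $\zeta_d$ and $c_t^{1/d}$. From $\sigma(y)=y$ together with the definition of $y_t$ I would compute $\sigma(y_t)=\zeta_n^{-e_t}y_t$, and therefore $\sigma(y_t^{n/d})=\zeta_d^{-e_t}\,y_t^{n/d}$. Comparing defining equations then yields
\[
  \sigma\bigl(\overline{\YY_t^{(\ell)}}\bigr)=\overline{\YY_t^{(\ell-e_t\bmod d)}},
\]
so the $\sigma$-orbit of each component has length $d/\gcd(d,e_t)$.

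The dichotomy now follows. If $d\mid e_t$, each orbit has length one, so every $\overline{\YY_t^{(\ell)}}$ is individually defined over $K^{\mathrm{ur}}$ and its equation differs from the standard curve only by the unit $\zeta_d^\ell c_t^{1/d}$, giving an unramified twist. If $d\nmid e_t$, the orbit has length greater than one and each component is only defined over the fixed field of its stabilizer, a proper intermediate field of $K_2K^{\mathrm{ur}}/K^{\mathrm{ur}}$; because $K_2/K$ is totally ramified of degree $n$ (as $p\nmid n$ and $\pi^{1/n}$ is a uniformizer root), any such intermediate field is ramified over $K$, and the twist is therefore ramified. The main technical point to check is that the unit contributions $\zeta_d$, $c_t^{1/d}$ really stay in $K^{\mathrm{ur}}$ (which uses $p\nmid n$ to make the relevant Kummer extensions tame and unramified) and that the orbit computation faithfully identifies the minimal field of definition of each component, which is standard cyclic Galois descent.
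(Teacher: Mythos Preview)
Your proof is correct and considerably more explicit than the paper's one-line argument, which reads in full: ``Follows from the fact that $e_t$ is the valuation of $c_t$ (see Proposition~\ref{prop:numbercomponents} for the notation).'' Taken literally this is a slight abuse, since $c_t$ was defined just before Proposition~\ref{prop:numbercomponents} as a product of units; what is meant is that the full leading coefficient of $f(\pi^{\mu_\s}x_t+r)$, before the content is divided out, equals $\pi^{e_t}$ times a unit, so that expressing the component equations in the $K$-rational variable $y$ rather than in $y_t=y\,\pi^{-e_t/n}$ introduces the factor $\pi^{e_t/d}$, and the extension $K(\pi^{e_t/d})/K$ is ramified exactly when $d\nmid e_t$.

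Your argument unpacks the same content by computing the inertia action on the set of components directly; this is the Galois-theoretic translation of the paper's valuation shortcut, and it has the virtue of making the field of definition of each component visible. One minor point: transporting the defining equation under $\sigma$ gives $\sigma\bigl(\overline{\YY_t^{(\ell)}}\bigr)=\overline{\YY_t^{(\ell+e_t\bmod d)}}$ rather than $\ell-e_t$ (the sign depends on whether one moves equations or points), but this does not affect the orbit length $d/\gcd(d,e_t)$ or the conclusion. Your identification of ``ramified twist'' with ``component not individually defined over $K^{\mathrm{ur}}$'' is consistent with the paper's intended meaning, as confirmed by the paragraph immediately following the proposition, where the relevant ramified character is that of $K(\sqrt[d]{\pi^{e_t}})/K$.
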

  \begin{proof}
    Follows from the fact that $e_t$ is the valuation of $c_t$ (see
    Proposition~\ref{prop:numbercomponents} for the notation).
  \end{proof}
  In particular, Proposition~\ref{proposition:weightedcluster} shows
  how to verify this condition from the weighted cluster picture. Note
  that for each $d \mid n$, if $d \nmid e_t$ the image of inertia in
  abelian part of the $d$-new part is given by $t$-copies of
  \[
    \mathop{\bigoplus_{i=1}^{d}}_{\gcd(i,d)=1}\chi^i,
  \]
  where $\chi$ is the
  ramified character corresponding  to the extension
  $K(\sqrt[d]{p^{e_t}})/K$ and $t = \frac{2g(y_t^d=f_t(x_t))}{\phi(d)}$.
  \begin{ex1} Recall the weighted cluster picture:
    $$
\scalebox{1.2}{\clusterpicture
 \Root {1} {first} {r1};
  \Root {} {r1} {r2};
  \Root {3} {r2} {r3};
  \Root {} {r3} {r4};
  \Root {3} {r4} {r5};
  \Root {} {r5} {r6};
  \Root {5} {r6} {r7};
  \Root {} {r7} {r8};
  \Root {} {r8} {r9}
   \ClusterLDName c1[][1][\s_2] = (r1)(r2);
  \ClusterLDName c2[][1][\s_3] = (r3)(r4);
   \ClusterLDName c3[][1][\s_4] = (r5)(r6);
   \ClusterLDName c4[][1][\s_1] = (c1)(c2)(c3)(c1n)(c2n)(c3n);
  \ClusterLDName c5[][1][\s_5] = (r7)(r8)(r9);
  \ClusterLDName c6[][0][\sm] = (c4)(c5)(c4n)(c5n);
\endclusterpicture}
$$
Proposition~\ref{proposition:weightedcluster} gives that: $e_{\sm}=0$,
$e_{\s_1}=6$, $e_{\s_2}=e_{\s_3}=e_{\s_4}=8$ and $e_{\s_5}=3$. This
implies that no ramified twist is involved on $\overline{Y_1^{(l)}}$
(its components are genus $1$-curves), while the curves
$\overline{Y_2}$, $\overline{Y_3}$ and $\overline{Y_4}$ (all of them
of genus $2$) involve a ramified twist $\chi$ corresponding to the
extension $\QQ_p(\sqrt[3]{p})/\QQ_p$. Such curves have a $2$-new part (of
genus $0$), a $3$-new part (of genus $1$) giving the representation of
inertia $\chi \oplus \chi^2$ and a $6$-new part (also of genus $1$) giving the same
representation of inertia.

Regarding the component $\overline{Y_5}$ (of genus $4$), let $\psi$ be
the character attached to the representation $\QQ_p(\sqrt{p})/\QQ_p$.
The curve has a $2$-new part of genus $1$, giving the representation
$\psi \oplus \psi$ (since $2 \nmid e_{\s_5}$); has a $3$-new part
(also of genus $1$) which does not involve any twist (as $3 \mid 3$)
hence inertia acts trivially in this $2$-dimensional part; and a
$6$-new part (of dimension $4$) where inertia acts via the quadratic
character $\psi$.

To understand the toric part, we need to understand the action of
$\Gal(\QQ_p(\sqrt[3]{p})/\QQ_p)$ on the component graph. The way to
compute this action is very well explained in \cite{DokGalois} (see
Examples 1.9 and 1.11). Concretely, it is given by what they call the
``lift-act-reduce'' procedure. Let
$\sigma \in \Gal(\overline{\QQ_p}/\QQ_p)$ and $(\bar{x},\bar{y})$ a
point on $\overline{Y_m}$. Any lift corresponds to a point
$(\tilde{x},tilde{y})$ (on the curve $\C$), hence the reduction of its
action corresponds to the point $(\sigma(\bar{x}),\sigma(\bar{y})$. In
particular, it fixes the components $\overline{Y_m^{(i)}}$ and its
intersection points as well. The same computation for the components
of $\overline{Y_2}$ gives the action:
\[
(\bar{x},\bar{y}) \to \left(\frac{\tilde{x}-1}{p},\frac{\tilde{y}}{p}\right) \to \left(\frac{\sigma(\tilde{x})-1}{p},\frac{\sigma(\tilde{y})}{p}\right) \to (\sigma(\bar{x}),\sigma(\bar{y})).
\]
In particular it also fixes the three components as well as the
intersection points. A similar computation proves the same result for
the components of $\overline{Y_5}$, hence the image of inertia is the
same over $\QQ_p$ than over $\QQ_p(\sqrt[3]{p})$ in this particular example.

\end{ex1}

\bibliographystyle{alpha}
\bibliography{biblio}
\end{document}